\documentclass[final,a4paper, 10pt]{article}

\usepackage[a4paper,bottom=20mm,scale=0.768]{geometry}

\usepackage[english]{babel}

\usepackage[mathscr]{euscript}
\usepackage{color}
\usepackage{fancyhdr}
\usepackage{bm}
\usepackage{hyperref}
\usepackage{cite}
\usepackage{showkeys}
\usepackage{subfigure}
\usepackage{float}

\usepackage{amsfonts}
\usepackage{amsmath}
\usepackage{amssymb}
\usepackage{amscd}
\usepackage{amsthm}

\usepackage{accents}
\usepackage{graphicx}
\usepackage{array}

\newtheorem{theorem}{Theorem}

\newtheorem{lemma}[theorem]{Lemma}
\newtheorem{corollary}[theorem]{Corollary}

\newtheorem{remark}[theorem]{Remark}
\newtheorem{assumption}[theorem]{Assumption}

\newcommand*{\N}{\ensuremath{\mathbb{N}}}
\newcommand*{\Z}{\ensuremath{\mathbb{Z}}}
\newcommand*{\R}{\ensuremath{\mathbb{R}}}
\newcommand*{\C}{\ensuremath{\mathbb{C}}}
\renewcommand{\i}{\mathrm{i}}
\renewcommand{\epsilon}{{\varepsilon}}

\renewcommand{\d}[1]{\,\mathrm{d}#1 \,}

\renewcommand{\O}{\mathcal{O}}

\newcommand{\F}{\mathcal{F}} 
\newcommand{\p}{{\mathrm{per}}}
\newcommand{\0}{{0}} 

\newcommand{\X}{{\mathcal{X}}}

\renewcommand{\Re}{\mathrm{Re}\,}
\renewcommand{\Im}{\mathrm{Im}\,}

\newcommand{\x}{{\bm{\widetilde{x}}}}
\renewcommand{\j}{\bm{j}}
\newcommand{\bxi}{{\bm{\xi}}}

\newcommand{\bx}{{\bm{x}}}
\newcommand{\by}{{\bm{y}}}
\newcommand{\bz}{{\bm{z}}}
\newcommand{\balpha}{{\bm{\alpha}}}


\newlength{\dhatheight}

\usepackage{color}

\setlength{\unitlength}{1cm}

\begin{document}

\sloppy

\title{Higher order convergence of perfectly matched layers in 3D bi-periodic surface scattering problems}
\author{
Ruming Zhang\thanks{Institute for Applied and Numerical mathematics, Karlsruhe Institute of Technolog (KIT), Karlsruhe, Germany; \texttt{ruming.zhang@kit.edu}}}
\date{}

\maketitle

\begin{abstract}
The perfectly matched layer (PML) is a very popular tool in the truncation of wave scattering in unbounded domains.  In \cite{Chand2009}, the author proposed a conjecture that for scattering problems with rough surfaces, the PML converges exponentially with respect to the PML parameter in any compact subset. In the author's previous paper \cite{Zhang2021b}, this result has been proved for periodic surfaces in two dimensional spaces, when the wave number is not a half integer. In this paper, we prove that the method has a high order convergence rate in the 3D bi-periodic surface scattering problems.  We extend the 2D results and prove that the exponential convergence still holds when the wavenumber is smaller than $0.5$. For lareger wavenumbers, although exponential convergence is no longer proved, we are able to prove that a higher order convergence for the PML method. 
\end{abstract}

\section{Introduction}

Since the PML method was proposed in \cite{Beren1994} by Berenger in 1994, it has been widely used in the simulation of wave propagation problems in unbounded domains. The main idea is to add an artificial absorbing layer outside the physical domain. The out going wave is abosrbed so fast within the layer that its Dirichlet/Neumann data is approximated by zero.  For the summary and drawbacks of the method we refer to \cite{John2010}. Since this method is not exact, a key question here is the convergence analysis. For bounded obstacles, we refer to \cite{Colli1998,Hohag2003,Lassa1998b,Chen2005a,Chen2007} for the error estimations.

For the case of unbounded obstacles, the problems become much more complicated and fewer results are available. In \cite{Chand2009}, for acoustic rough surface scattering problems, a linear convergence result was proved when the Neumann boundary condition was imposed. This also holds for the Dirichlet boundary condition with a similar proof. For Maxwell's equations, an exponential convergence was proved for absorbing wavenumbers, see \cite{Li2011}. For the special cases where the surfaces periodic and the incident fields are quasi-periodic, the exponential convergence is proved, and we refer to \cite{Chen2003,Zhou2018} for details.

In \cite{Chand2009}, the authors made a conjecture that although the convergence of the PML method was proved to be only linear, exponential convergence holds for any compact subdomains. Motivated by this conjecture, the author considered the 2D periodic surface scattering problems in \cite{Zhang2021b} and exponential convergence was proved except for the cases when the wavenumbers are half-integers.
When the wavenumber is a half-integer, since two singularities coincide, it is impossible to prove exponential convergence with the proposed method. What's worse, direct extension to 3D cases  holds only for small wavenumbers, i.e., when the wavenumber is smaller than $0.5$. For larger wavenumbers, the difficulty is exactly the same as the exceptional cases for 2D problems. In this paper, we will prove the high order convergence by a detailed study of coincided singularities. 

First, we introduce the mathematical model and important notations for this problem.  Let $\Gamma$ be a bi-periodic surface in $\R^3$ defined by
\[
\Gamma:=\big\{\big(t_1,t_2,\zeta(t_1,t_2)\big),\,t_1,\,t_2\in\R\big\},
\]
where $\zeta$ is a bi-periodic and bounded function. For simplicity, we assume that $\zeta$ is $2\pi$-periodic in both directions. The domain aboved $\Gamma$ is defined by 
\[
\Omega:=\big\{(t_1,t_2,t_3):\,t_1,\,t_2\in\R,\,t_3>\zeta(t_1,t_2)\big\}.
\]
We are considering the following problem:
\begin{equation}
 \label{eq:org}
 \Delta u+k^2 u=f\text{ in }\Omega;\quad u=0\text{ on }\Gamma,
\end{equation}
where $f\in L^2(\Omega)$ is compactly supported. 

\begin{remark}
    The result for \eqref{eq:org} can be easily extended to other boundary conditions, such as the impedance boundary condition of periodic layers, as long as the well posedness of the problem is guaranteed.
\end{remark}

Let $\Gamma_H:=\R^2\times\{H\}$ be a plane  lying above $\Gamma$, and the domain 
\[
\Omega_H:=\{x\in\R^3:\, \zeta(x_1,x_2)<x_3<H\}.
\]
For simplicity, let $f$ be compactly supported in $\Omega_H$. 
For the visualization  we refer to Figure \ref{fig:sample}.

\begin{figure}[h]
    \centering
    \includegraphics[width=0.5\textwidth]{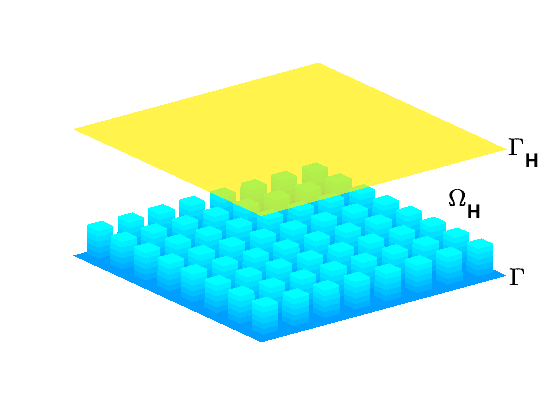}
    \caption{Domains and surfaces}
    \label{fig:sample}
\end{figure}

To guarantee that the solution $u$ is physical, we need the so-called upward propagating radiation condition (UPRC, see \cite{Chand2005}):
\[
u(\bx)=2\int_{\Gamma_H}\frac{\partial \Phi(\bx,\by)}{\partial x_3}u(\by)\d s(\by),\quad x_3>H,
\]
where $\Phi(\bx,\by)=\frac{\exp(\i k|\bx-\by|)}{4\pi|\bx-\by|}$ is the fundamental solution of the Helmholtz equation in $\R^3$. In \cite{Chand2005}, it was proved that the UPRC is equivalent to the following boundary condition:
\begin{equation}
 \label{eq:tbc}
 \frac{\partial u}{\partial x_3}=T^+ u=\i\int_{\R^2}\sqrt{k^2-|\bxi|^2}\widehat{u}(\bxi,H)e^{\i \x\cdot\bxi}\d\bxi,\quad \text{ on }\Gamma_H
\end{equation}
where $\x=(x_1,x_2)^\top$, the square root takes non-negative real and imaginar parts, and
\[
u(\x,H)=\int_{\R^2}\widehat{u}(\bxi,H)e^{\i \x\cdot\bxi}\d\bxi.
\]
With this boundary condition, the problem is reduced to the 3D bi-periodic domain $\Omega_H$ with finite height.

The well-posedness of the problem \eqref{eq:org}-\eqref{eq:tbc} has been proved in both standard and weighted Sobolev spaces in \cite{Chand2005,Chand2010}. We first introduce the weighted Sobolev space
\[
H^1_r(\Omega_H):=\left\{v\in H^1_{loc}(\Omega_H):\,(1+|\x|^2)^{r/2}v\in H^1(\Omega_H)\right\}.
\]
Given any compactly supported $f\in L^2(\Omega_H)$, the problem \eqref{eq:org}-\eqref{eq:tbc} has a unique weak solution in $H^1_r(\Omega_H)$ where $r\in(-1,1)$, and the solution $u$ depends continuously on $f$.

We apply the Floquet-Bloch transform to \eqref{eq:org}-\eqref{eq:tbc}, the solution $u$ is written as an integral of a family of periodic solutions, with respect to the Floquet parameter in a unit square. From the perturbation theory and Neumann series, the periodic solutions depend analytically on the Floquet parameter except for finite number of circles (called singular curves). For any point on the singular curves, there are square root singularities with respect to the Floquet parameter. It is easily proved that the PML solution converges exponentially when the Floquet parameters are away from the singular curves. But the convergence becomes slower when the point is closer to the singular curves. In particular, the convergence is linear on the curves. 

To deal with the slow convergence around the singular curves, we consider two cases separately. When the singular curve does not intersect with other curves, the 2D method is directly extended and the exponential convergence is proved. When multiple curves intersect at one point (fortunately we only have finite number of such points), we consider a small neighbourhood of this point. From a detailed study of the 
Neumann series around this point, we prove that the higher order convergence of the PML method. 

The rest of this paper is organized as follows. In Section 2, the Floquet-Bloch transform and its application 
 is introduced. In Section 3, the PML problems are described. We prove the convergence rate for a special case that $k<0.5$ in Section 4, and prove that for larger $k$'s  in Section 5.

The following are some important notations in this paper.

\begin{center}
    \begin{minipage}{\textwidth}
    \centering
    \begin{tabular}{|l |l|}
    \hline
    Notation &  Definition\\
    \hline\hline
       $\widetilde{\nabla}$  &$ \left(\frac{\partial }{\partial x_1},\,\frac{\partial}{\partial x_2}\right) $\\
       $\# S$ & The cardinality of the set $S$ \\
       $D(\balpha,r)$ & The open disk in $\R^2$ with center $\balpha$ and radius $r$\\
       $S(\balpha,r)$& The cirlce $\partial D(\balpha,r)$\\
       $B(k,\delta)$ & The open ball $\{s\in\C:\,|s-k|<\delta\}\subset\C$\\
       $B_-(k,\delta)$ & The open lower half ball $\{s\in\C:\,|s-k|<\delta,\,\Im(s)<0\}\subset\C$\\
       $C_-(k,\delta)$ & The half circle $\partial B_-(k,\delta)\setminus[k-\delta,k+\delta]+\i\{0\}$\\
       $C(k,\delta,[a,b])$& Partial circle of $\partial B(k,\delta)$ with angle in $[a,b]$\\
       $A(\balpha,r,\delta)$ & The full annulus with center $\balpha$, smaller radius $r-\delta$ and larger radius $r+\delta$\\
       $A(\balpha,r,\delta,[a,b])$ & The partial annulus of $A(\balpha,r,\delta)$ with angle in $[a,b]$\\
       $d(X,Y)$ & The distance between the sets $X$ and $Y$\\
       $\mathcal{O}(S,T,m,n)$\footnotemark & The sum of all permutations with $m$ times $S$ and $n$ times $T$\\
       \hline
    \end{tabular}   
    \end{minipage}
    \footnotetext[1]{Note that $\mathcal{O}(S,T,m,n)$ contains ${{{m+n}\choose{m}}}$ terms in total. In particular, when $S$ and $T$ are commutative, i.e., $ST=TS$, $\mathcal{O}(S,T,m,n)={{m+n}\choose{m}}S^m T^n$.}
\end{center}

\section{The Floquet-Bloch transform}

\subsection{Definition of the Floquet-Bloch transform}
First we introduce the following notataions. Let $\Omega_0:=\Omega\cap(-\pi,\pi)^2\times\R$, $\Omega_H^0:=\Omega_H\cap(-\pi,\pi)^2\times\R$, $\Gamma_0:=\Gamma\cap(-\pi,\pi)^2\times\R$ and $\Gamma_H^0:=\Gamma_H\cap(-\pi,\pi)^2\times\R$. For simplicity, we assume that $f$ is compactly supported in $\Omega_H^0$. Let the first Brilloiun zone be denoted as $W:=[-1/2,1/2]^2$. For the visualization of the periodicity cell we refer to Figure \ref{fig:cell}.

\begin{figure}[h]
    \centering
    \includegraphics[width=0.3\textwidth]{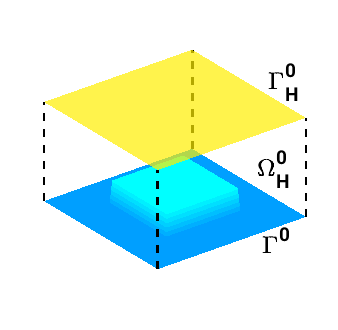}
    \caption{One periodicity cell.}
    \label{fig:cell}
\end{figure}

Define the Floquet-Bloch transform $\F$ for any smooth compactly supported function $\phi\in C_0^\infty(\Omega_H)$ by:
\[
(\F\phi)(\balpha,\bx)=\sum_{\j\in\Z^2}\phi(\x+2\pi \j,x_3)e^{-\i\balpha \cdot(\x+2\pi \j)},
\]
where $\balpha=(\alpha_1,\alpha_2)\in W$ is the Floquet parameter. 
From \cite{Lechl2015e,Lechl2016}, for any fixed $\balpha$, $(\F\phi)(\balpha,\cdot)$ is bi-periodic
with respect to $\x$ with the period $2\pi$, and $e^{\i\balpha\cdot\x}(\F\phi)(\balpha,\bx)$ is bi-periodic with respect to $\balpha$ with the period $1$. Moreover, the following equations hold:
\[
\left(\F\left(\frac{\partial\phi}{\partial x_\ell}\right)\right)(\balpha,\bx)=\left(\frac{\partial}{\partial x_\ell}-\i\alpha_\ell\right)(\F\phi)(\balpha,\bx),\,\ell=1,2,\quad \left(\F\left(\frac{\partial\phi}{\partial x_3}\right)\right)(\balpha,\bx)=\frac{\partial}{\partial x_3}(\F\phi)(\balpha,\bx).
\]

To introduce the important properties of the Floquet-Bloch transform $\F$, we first need the definitions of the following spaces. For details we refer to \cite{Lechl2016}. We begin with the space $H^\ell\left(W;H^s(\Omega_H^0)\right)$, where $\ell\in\N_0$ and $s\in\R$. This space contains all elements in the distribution space defined in $W\times\Omega_H^0$ with finite norm
\[
\|\psi\|_{H^\ell\left(W;H^s(\Omega_H^0)\right)}=\left[\sum_{\bm{\gamma}\in \N^2_0,\,|\bm{\gamma}|\leq\ell}\int_{W}\left\|\partial^{\bm{\gamma}}_\balpha \psi(\balpha,\cdot)\right\|^2_{H^s(\Omega_H^0)}\d\balpha\right]^{1/2}.
\]
Then for any $\0<\theta<1$, we define the space $H^{\ell+\theta}\left(W;H^s(\Omega_H^0)\right)$ by space interpolation:
\[
H^{\ell+\theta}\left(W;H^s(\Omega_H^0)\right)=\left[H^{\ell}\left(W;H^s(\Omega_H^0)\right),H^{\ell+1}\left(W;H^s(\Omega_H^0)\right)\right]_\theta.
\]
Thus $H^r\left(W;H^s(\Omega_H^0)\right)$ is well defined for all $r\geq 0$ and $s\in\R$. The dual space of $H^r\left(W;H^s(\Omega_H^0)\right)$ with respect to the inner product
\[
(\psi_1,\psi_2)=\int_{W}\left(\psi_1(\balpha,\cdot),\psi_2(\balpha,\cdot)\right)_{H^s(\Omega_H^0)}\d\balpha
\]
is denoted by $H^{-r}\left(W;H^s(\Omega_H^0)\right)$. Let $H^r\left(W;H^s_\p(\Omega_H^0)\right)$ be the subspace of $H^r\left(W;H^s(\Omega_H^0)\right)$, where all the elements are periodic with fixed $\balpha$. Now we are prepared to introduce the following properties of $\F$.

\begin{theorem}[Theorem 8, \cite{Lechl2016}]
    \label{th:fb}
    The Floquet-Bloch transform $\F$ is extended to an isomorphism between $H^s_r(\Omega_H)$ and $H^r(W;H^s_\p(\Omega_H^0))$ for any $s,r\in\R$. The inverse transform is defined as
    \[
\left(\F^{-1}\psi\right)(\x+2\pi \j,x_3)=\int_W \psi(\balpha,\bx)e^{\i\balpha\cdot(\x+2\pi \j)}\d\balpha,\quad \bx\in\Omega_H^0,\,\j\in\Z^2.
    \]
\end{theorem}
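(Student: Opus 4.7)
The plan is to build the isomorphism in stages, going from the elementary case of smooth compactly supported functions with $s=r=0$, then climbing to integer Sobolev indices via the commutation relations already listed, and finally reaching arbitrary real $s,r$ by duality and complex interpolation.

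First I would take $\phi\in C_0^\infty(\Omega_H)$, so the sum defining $(\F\phi)(\balpha,\bx)$ is finite. Direct computation shows $\F\phi$ is smooth, $2\pi$-periodic in $\x$ for every fixed $\balpha$, and that $e^{\i\balpha\cdot\x}(\F\phi)(\balpha,\bx)$ is $1$-periodic in $\balpha$. A Parseval-type argument, based on the orthogonality $\int_W e^{\i\balpha\cdot 2\pi(\j-\j')}\d{\balpha}=\delta_{\j,\j'}$ together with orthogonality of $\{e^{\i(\j+\balpha)\cdot\x}\}$ on $\R^2\times W$, gives the isometry $\|\F\phi\|_{L^2(W;L^2(\Omega_H^0))}=\|\phi\|_{L^2(\Omega_H)}$. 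The inverse formula stated in the theorem is verified on $C_0^\infty$ by swapping the sum and the $\balpha$-integral (justified by the finiteness of the sum) and invoking the same orthogonality. Density of $C_0^\infty(\Omega_H)$ in $L^2(\Omega_H)$ then promotes $\F$ to a unitary isomorphism between $L^2(\Omega_H)$ and $L^2(W;L^2_\p(\Omega_H^0))$, which settles the case $s=r=0$.

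Next I would install the Sobolev exponent $s$. For $s\in\N_0$ the commutation rules $\F(\partial_{x_\ell}\phi)=(\partial_{x_\ell}-\i\alpha_\ell)\F\phi$ and $\F(\partial_{x_3}\phi)=\partial_{x_3}\F\phi$ allow one to express every $x$-derivative of $\phi$ in terms of derivatives of $\F\phi$ plus lower order terms with coefficients bounded uniformly in $\balpha\in W$ (because $|\alpha_\ell|\leq 1/2$). This yields a norm equivalence $\|\phi\|_{H^s(\Omega_H)}\simeq\|\F\phi\|_{L^2(W;H^s_\p(\Omega_H^0))}$, so $\F$ is an isomorphism at $r=0$ and integer $s\geq 0$. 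The weight index $r\in\N_0$ is inserted analogously: multiplication by $\x^{\bm\gamma}$ on $\Omega_H$ corresponds, on the transform side, to $(\i\partial_\balpha)^{\bm\gamma}$ acting on the genuinely $1$-periodic object $e^{\i\balpha\cdot\x}(\F\phi)(\balpha,\bx)$; expanding by Leibniz and absorbing the lower-order factors $\x^{\bm\delta}$ (which are themselves handled inductively) yields the norm equivalence $\|(1+|\x|^2)^{r/2}\phi\|_{H^s(\Omega_H)}\simeq\|\F\phi\|_{H^r(W;H^s_\p(\Omega_H^0))}$.

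Finally, to pass from nonnegative integers to arbitrary real $s,r$ I would use the dualities $(H^s_r)^\ast=H^{-s}_{-r}$ and $(H^r(W;H^s_\p))^\ast=H^{-r}(W;H^s_\p)$ together with the built-in interpolation definition of $H^{\ell+\theta}(W;H^s_\p(\Omega_H^0))$ stated in the excerpt and the analogous complex interpolation scale on the weighted Sobolev spaces $H^s_r(\Omega_H)$. Since an isomorphism interpolates to an isomorphism on interpolation scales and dualizes to an isomorphism between duals, the isomorphism property propagates to all $s,r\in\R$. The explicit inverse is continuous across all of these steps because it is continuous on $C_0^\infty$, which is dense in the scales.

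I expect the main obstacle to be the bookkeeping around the twisted periodicity in $\balpha$: the norm on $H^r(W;H^s_\p(\Omega_H^0))$ uses honest partial derivatives in $\balpha$ over $W$, but only the factor $e^{\i\balpha\cdot\x}\F\phi$ is $1$-periodic in $\balpha$, not $\F\phi$ itself. Every time the correspondence ``multiplication by $\x$'' $\leftrightarrow$ ``$\i\partial_\balpha$'' is invoked, the factor $e^{\pm\i\balpha\cdot\x}$ must be tracked so that no spurious boundary terms or unbounded $\x$-factors appear, and so that the interpolation and duality steps on the $\balpha$ side are performed with the correct periodic (torus) structure. Once this is laid out carefully, the integer cases reduce to Leibniz plus Plancherel, and the general statement follows automatically from the abstract interpolation and duality machinery.
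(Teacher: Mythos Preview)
The paper does not prove this statement at all: it is quoted verbatim as Theorem~8 of \cite{Lechl2016} and used as a black box, so there is no ``paper's own proof'' to compare against. Your sketch is the standard route one finds in the cited reference and related literature (Plancherel on $C_0^\infty$, commutation relations for integer indices, then duality and interpolation), and nothing in it is obviously wrong; but for the purposes of this paper a citation suffices, and reproducing the argument is unnecessary.
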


\subsection{Periodic problems and regularity}

Now we apply the Floquet-Bloch transform to the original problem \eqref{eq:org}. 
Let $w(\balpha,\bx):=\F u$. For any fixed Floquet parameter $\balpha$, from direct computation, $w(\balpha,\cdot)$ is bi-periodic and satisfies
\begin{equation}
 \label{eq:per}
 \Delta w(\balpha,\cdot)+2\i\balpha\cdot\widetilde{\nabla} w(\balpha,\cdot)+(k^2-|\balpha|^2)w(\balpha,\cdot)=e^{-\i\balpha\cdot \x}f\text{ in }\Omega_H^0;\quad w(\balpha,\cdot)=0\text{ on }\Gamma_0,
\end{equation}
with the boundary condition:
\begin{equation}
 \label{eq:tbc_per}
\frac{\partial w(\balpha,\cdot)}{\partial x_3}=T^+_\balpha w(\balpha,\cdot)=\i \sum_{\j\in\Z^2}\sqrt{k^2-|\balpha+\j|^2}\widehat{w}(\balpha,\j)e^{\i \j\cdot \x}\text{ on }\Gamma_H^0.
\end{equation}

Similar to the results in 2D cases in \cite{Kirsc1993}, the problem \eqref{eq:per}-\eqref{eq:tbc_per} has a unique solution in $H^1_\p(\Omega_H^0)$ for any fixed $\balpha\in W$, where $H^1_\p(\Omega_H^0)$ is the subspace of $H^1(\Omega_H^0)$ for bi-periodic functions. Thus $w(\balpha,\cdot)$ is well defined for any $\balpha\in W$, then the original solution $u$ is given by the inverse Floquet-Bloch transform:
\begin{equation}
 \label{eq:ifb}
 u(\bx)=\int_W e^{\i\balpha\cdot \x}w(\balpha,\bx)\d\balpha,\quad \bx\in\Omega_H.
\end{equation}

The variational form for \eqref{eq:per}-\eqref{eq:tbc_per} is given as follows. Find $w(\balpha,\cdot)\in H^1_\p(\Omega_H^0)$ such that
\begin{equation} \label{eq:per_var}
\begin{split}
   \int_{\Omega_H^0}\left[\nabla w(\balpha,\cdot)\cdot\nabla\overline{\phi}-2\i\balpha\cdot\widetilde{\nabla} w(\balpha,\cdot)\overline{\phi}-(k^2-|\balpha|^2)w(\balpha,\cdot)\overline{\phi}\right]\d \bx\\
    -4\pi^2 \i\sum_{\j\in\Z^2}\sqrt{k^2-|\balpha+\j|^2}\widehat{w}(\balpha,\j)\overline{\widehat{\phi}(\j)}=-\int_{\Omega_H^0}e^{-\i\balpha\cdot \x}f(\bx)\overline{\phi}(\bx)\d \bx.
\end{split}
\end{equation}
From Riesz representation theorem, we 
define the following operators and functions:
\begin{eqnarray*}
    &&\left<A(\balpha)\psi,\phi\right>:= \int_{\Omega_H^0}\left[\nabla \psi\cdot\nabla\overline{\phi}-2\i\left(\balpha\cdot\widetilde{\nabla} \psi\right)\overline{\phi}-(k^2-|\balpha|^2)\psi\overline{\phi}\right]\d \bx;\\
    && \left<B_{\j}\psi,\phi\right>=4\pi^2\i \widehat{\psi}(\j)\overline{\widehat{\phi}(\j)};\\
    && \left<G(\balpha,\cdot),\phi\right>=-\int_{\Omega_H^0}e^{-\i\balpha\cdot \x}f(\bx)\overline{\phi}(\bx)\d \bx;
\end{eqnarray*}
where $\phi,\psi\in H^1_\p(\Omega_H^0)$. The operators $A(\balpha)$ and $B_{\j}$ are bounded in $H^1_\p(\Omega_H^0)$. 
Since $|\balpha|^2=\alpha_1^2+\alpha_2^2$, all the operators and functions depend real analytically on $\balpha\in \R^2$. 
The variational equation \eqref{eq:per_var} is written as:
\begin{equation}
    \label{eq:per_operator}
    \left[A(\balpha)-\sum_{\j\in\Z^2}\sqrt{k^2-|\balpha+\j|^2}B_{\j}\right]w(\balpha,\cdot):=S(\balpha)w(\balpha,\cdot)=G(\balpha,\cdot).
\end{equation}
Since the problem is always well-posed for any $\balpha\in W$, there  is a constant $C>0$ such that
$\left\|S^{-1}(\balpha)\right\|\leq C$ holds for all $\balpha\in W$.

From \eqref{eq:per_operator}, when $\balpha$ is away from zeros of all the square roots, $S(\balpha)$ depends real analytically on $\balpha$ in $\R^2$. But when at least one square root is close to $0$, there is a square root singularity. Since those points are critical in the following analysis,  we need the following notations for simplicity. First, let 
\[
P:=\Big\{\balpha\in {W}:\, \exists\,\j\in \Z^2, \,\text{ such that }|\balpha+\j|=k\Big\}.
\]
For any $\balpha\in P$, define
\[
J(\balpha):=\Big\{\j\in\Z^2:\,|\balpha+\j|=k\Big\}.
\]
 We refer to Figure \ref{fig:singular} for different structures of $P$ with different wave number $k$'s.

\begin{remark}
    There are only finite number of $\balpha$'s such that $\# J(\balpha)>1$. In particular, when $k<0.5$, $P=S(\bm{0},k)$. Moreover, for all the $\balpha\in P$, $\# J(\balpha)=1$.
\end{remark}

\begin{figure}[h]
    \centering
    \includegraphics[width=0.32\textwidth]{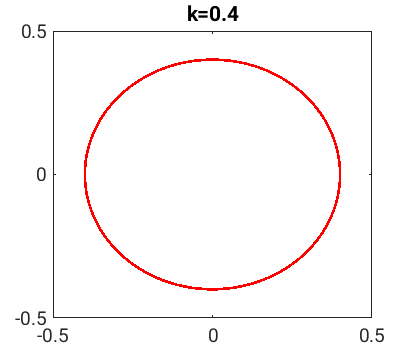}\includegraphics[width=0.32\textwidth]{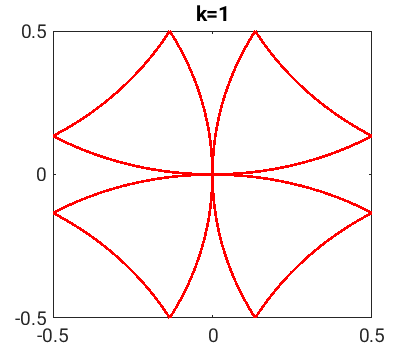}\includegraphics[width=0.32\textwidth]{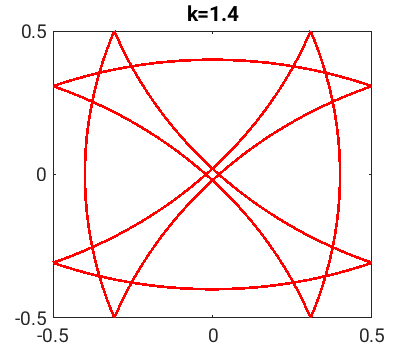}
    \caption{Structures of $P$ with different $k$'s.}
    \label{fig:singular}
\end{figure}

We consider a random point $\balpha_0\in P$, suppose $\# J(\balpha_0)=m\in\N_0$. Let $J(\balpha_0):=\{\j_1,\dots,\j_m\}$.  Since the square root function is continuous with real valued variables, there is a  sufficiently small $r_0>0$ such that for any  $\balpha\in D(\balpha_0,r_0)$, $\sqrt{k^2-|\balpha+\j|^2}\neq 0$ for all $\j\notin J(\balpha_0)$. In this case, let 
\[
\widetilde{A}(\balpha):=A(\balpha)-\sum_{\j\notin J(\balpha_0)}\sqrt{k^2-|\balpha+\j|^2}B_{\j},
\]
then $\widetilde{A}(\balpha)$ depends analytically on $\balpha\in D(\balpha_0,r_0)$. Moreover, when $r_0>0$ is sufficiently small, $\widetilde{A}(\balpha)=S(\balpha)-\sum_{\ell=1}^m \sqrt{k^2-|\balpha+\j_\ell|^2}B_{\j_\ell}$ is a small perturbation of $S(\balpha)$ thus $\widetilde{A}^{-1}(\balpha)$ exists and is uniformly bounded. Then
\[
    \left[\widetilde{A}(\balpha)+\sum_{\ell=1}^m\sqrt{k^2-|\balpha+\j_\ell|^2}B_{\j_\ell}\right]w(\balpha,\cdot)=G(\balpha,\cdot)
\]
is equivalent to
\[
\left[I-\sum_{\ell=1}^m\sqrt{k^2-|\balpha+\j_\ell|^2}\widetilde{B}_\ell(\balpha)\right]w(\balpha,\cdot)=\widetilde{G}(\balpha,\cdot)
\]
where $\widetilde{B}_\ell(\balpha)=-\widetilde{A}^{-1}(\balpha)B_{\j_\ell}$ and $\widetilde{G}(\balpha,\cdot)=-\widetilde{A}^{-1}(\balpha){G}(\balpha,\cdot)$. 
We can easily obtain the form of solution from the Neumann series:
\begin{equation}
\label{eq:neumann}
\begin{split}
w(\balpha,\cdot)&=\sum_{n=0}^\infty\left[\sum_{\ell=1}^m\sqrt{k^2-|\balpha+\j_\ell|^2}\widetilde{B}_\ell(\balpha)\right]^n\widetilde{G}(\balpha,\cdot)\\
&=\sum_{X\subset\{1,2,\dots,m\}}\left(\prod_{\ell\in X}\sqrt{k^2-|\balpha+\j_\ell|^2}\right)w_X(\balpha,\cdot),
\end{split}
\end{equation}
where $w_X(\balpha,\cdot)$ depends real analytically on $\balpha$ in  $D(\balpha_0,r_0)$. In particular, when $m=1$, $J(\balpha_0)=\{\j_1\}$ and
\begin{equation}
\label{eq:neu_1}
w(\balpha,\cdot)=w_0(\balpha,\cdot)+\sqrt{k^2-|\balpha+\j_1|^2}w_1(\balpha,\cdot),
\end{equation}
where $w_0(\balpha,\cdot)$, $w_1(\balpha,\cdot)$ depend real analytically on $\balpha$.

\section{Analytic extension and modification of the inverse Floquet-Bloch transform}
\label{sec:analytic}

In this section, we consider the analytic extension of $w(\balpha,\cdot)$ with respect to $\balpha$, when $\balpha$ is near the singular curves. With this extension, we also obtain an equivalent form of the inverse Floquet-Bloch transform \eqref{eq:ifb}. At first, we redefine the square root function "$\sqrt{\quad}$" to have the branch cut on negative imaginary axis:
\[
\sqrt{z}=\sqrt{|z|}\exp\left(\frac{\i t}{2}\right),\text{ where }t\in\left(-\frac{\pi}{2},\frac{3\pi}{2}\right].
\]

\subsection{Analytic extension }

For simplicity, we first assume for any $-\j\in\Z^2$, either $S(-\j,k)\cap W^{\mathrm{o}} \neq\emptyset$ or $S(-\j,k)\cap{W}=\emptyset$, where $W^{\mathrm{o}}=(-1/2,1/2)^2$ is the interior of $W$. This means that there are no curves which are tangential to $W$ from the exterior of it. So if a circle $S(-\j,k)$ does not pass through $W$, it has a positive distance with $\overline{W}$, defined as:
\[
\delta_W:=d\Big(S(-\j,k),W\Big).
\]
Actually, this assumption can be guaranteed by the translation of $W$. 

The number of singular circles that have non-empty intersection with $W$ is finite, denoted by
\[
S(-\j_\ell,k),\quad\ell=1,2,\dots,M.
\]
Note that when $k<0.5$, $M=1$ and $\j_1=\bm{0}$. It is more convenient to use the polar coordinate to study the analytical behaviour near the circles. For any $\bm{\ell}=\j_\ell$, let $\balpha$ be a point close to the circle $S(-\bm{\ell},k)$ and  set $\balpha:=(s\cos\phi,s\sin\phi)-\bm{\ell}$. 
With abuse of notation, we  replace the variable $\balpha\in\R^2$ in the functions and operators  with $(s,\phi)\in\R^2$ to denote the dependence of the new variables $s$ and $\phi$. Thus the variational problem \eqref{eq:per_var} is written as the new form:
\begin{equation} \label{eq:per_var_st}
\begin{split}
   &\int_{\Omega_H^0}\Big[\nabla w(s,\phi,\cdot)\cdot\nabla\overline{\phi}-2\i(s\cos\phi-\ell_1,s\sin\phi-\ell_2)\cdot\widetilde{\nabla} w(s,\phi,\cdot)\overline{\phi}\Big.
   \\&\qquad
   \Big.-(k^2-(s\cos\phi-\ell_1)^2-(s\sin\phi-\ell_2)^2)w(s,\phi,\cdot)\overline{\phi}\Big]\d \bx\\
    &-4\pi^2 \i\sum_{\j\in\Z^2}\sqrt{k^2-(s\cos\phi-\ell_1+j_2)^2-(s\sin\phi-\ell_2+j_2)^2}\widehat{w}(s,\phi,\j)\overline{\widehat{\phi}(\j)}\\=&-\int_{\Omega_H^0}e^{-\i(s\cos\phi-\ell_1,s\sin\phi-\ell_2)\cdot \x}f(\bx)\overline{\phi}(\bx)\d \bx.
\end{split}
\end{equation}
From the definition and the analytical dependence of $\balpha$ on $(s,\phi)$, $\widetilde{A}(s,\phi)$ and $G(s,\phi,\cdot)$ depend analytically on $(s,\phi)$. From the perturbation theory, the inverse operator $\widetilde{A}^{-1}(s,\phi)$ depends analytically in a small neighbourhood of $(s,\phi)$. 
Thus $\widetilde{B}(s,\phi)$ and $\widetilde{G}(s,\phi,\cdot)$ also depend analytically on $(s,\phi)$.

From the Neumann series \eqref{eq:neumann},  each $w_X(s,\phi,\cdot)$ involves finite number of operators $\widetilde{B}_\ell(s,\phi)$ and one element $\widetilde{G}(s,\phi,\cdot)$ and finite number of terms $\sqrt{k^2-|\balpha+\j_\ell|^2}^{2\ell_m}$ where $\ell_m\in\N$. Since
\[
\sqrt{k^2-|\balpha+\j_\ell|^2}^{2\ell_m}=\Big(k^2-(s\cos\phi-\ell_1+j_2)^2-(s\sin\phi-\ell_2+j_2)^2\Big)^{\ell_m}
\]
where $\ell_m$ is a non-negative integer, 
it depends analytically on $s$ and $\phi$. Then $w_X(s,\phi,\cdot)$ depends analytically on $s$ and $\phi$. Thus  there are two small positive numbers $\delta_0,\epsilon_0>0$ such that when $(s,\phi)\in B(k,\delta_0)\times B(\phi_0,\epsilon_0)\subset \C\times\C$, $w_X(s,\phi,\cdot)$ depends analytically on $(s,\phi)$. 

 For any point $(k,\phi_0)$ where $\phi_0\in[0,2\pi]$ and subset $X\subset\{1,2,\dots,m\}$, the function $w_X(s,\phi,\cdot)$ depends analytically in a neighourhood $B(k,\delta_0)\times B(\phi_0,\epsilon_0)$. Since $[0,2\pi]$ is compact and $[0,2\pi]\subset\cup_{\phi\in[0,2\pi]} B(\phi,\epsilon_\phi)$, from Heine-Borel Theorem, there is a finite cover $[0,2\pi]\subset\cup_{\ell=1}^N B(\phi_\ell,\epsilon_{\ell})$. Let $\delta:=\min\{\delta_{\phi_\ell}:\ell=1,2,\dots,N\}$, then $w_X(s,\phi,\cdot)$ depends analytically on $(s,\phi) $ in $B(k,\delta)\times \cup_{\ell=1}^N B(\phi_\ell,\epsilon_{\phi_\ell})\supset B(k,\delta)\times[0,2\pi]$. This implies that for any $\phi\in[0,2\pi]$,  $w_X(s,\phi,\cdot)$ depends analytically on $s\in B(k,\delta)$.  Note that here $w_X(s,\phi,\cdot)$ is defined piecewisely with respect to the angle $\phi$.  
 
Since the above result holds for any $\bm{\ell}=\j_\ell$, we can choose a sufficiently small $\delta>0$ such that $w_X(s,\phi,\cdot)$ depends analytically on $s\in B(k,\delta)\subset\C$ for any fixed $\phi\in[0,2\pi]$, in its own polar coordinate system. From the periodicity of the distribution of singular circles, this result holds uniformly in the neighbourhood for any circle $S(-\j,k)$ with  $\j\in\Z^2$.

\begin{remark}
   From now on, we also require that $\delta<\delta_W$ to avoid further complexities. 
\end{remark}

\subsection{Reformulation of the  inverse Floquet-Bloch transform}

For any circle $S(-\j_\ell,k)$ where $\ell=1,2,\dots,M$, we define the annulus $A(-\j_\ell,k,\delta)$, where the parameter $\delta>0$ is chosen in the previous subsection. The set
\[
R:=W\setminus\left[\cup_{\ell=1}^M A(-\j_\ell,k,\delta)\right].
\]
Then for any point in $R$, its distance to any singular circles is not less than $\delta$.

For the case $k<0.5$, there is only one circle $S(\bm{0},k)$ that has non-empty intersection with $W$. In this case, we also require that  $\delta<\min\{k,0.5-k\}$ to guarantee that $A(\bm{0},k,\delta)$ is a well defined annulus in the domain $W$. Then define 
\[
S:=A(\bm{0},k,\delta)\subset W,
\]
we easily split $W:=S\cup R$. But the case that $k\geq 0.5$ is much more complicated.

Suppose $\balpha_0$ is a point such that $\# J(\balpha_0)=m>1$. Let $J(\balpha_0)=\big\{\bm{\ell}_1,\dots,\bm{\ell}_m\big\}$, then all the circles $S(-\bm{\ell}_1,k),\dots, S(-\bm{\ell}_m,k)$ have a common point $\balpha_0$. Thus in the disk $D(\balpha_0,2\delta)$, the annuluses $A(-\bm{\ell}_1,k,\delta),\dots,A(-\bm{\ell}_m,k,\delta)$ have intersections with each other; outside this ball, each annulus does not have intersections with any other annuluses. For any $\bm{\ell}_t$ where $t=1,2,\dots,m$, we find two angles $\xi_t<\eta_t$ such that $A(-\bm{\ell}_t,k,\delta,[\xi_t,\eta_t])$ is the smallest such that $A(-\bm{\ell}_t,k,\delta)\cap D(\balpha_0,2\delta)\subset A(-\bm{\ell}_t,k,\delta,[\xi_t,\eta_t])$. Let
\[
Q(\balpha_0)=\cup_{t=1}^m A(-\bm{\ell}_t,k,\delta,[\xi_t,\eta_t]),
\]
then the area of $Q(\balpha_0)=O(\delta^2)$. Let
\[
Q:=\cup_{\balpha_0\in P:\,\# J(\balpha_0)>1} Q(\balpha_0),
\]
then it contains all the intersections with different singular circles. For the definition of the domain $Q(\balpha_0)$ we refer to Figure \ref{fig:bands}.

\begin{figure}[h]
    \centering
    \includegraphics[width=0.4\textwidth]{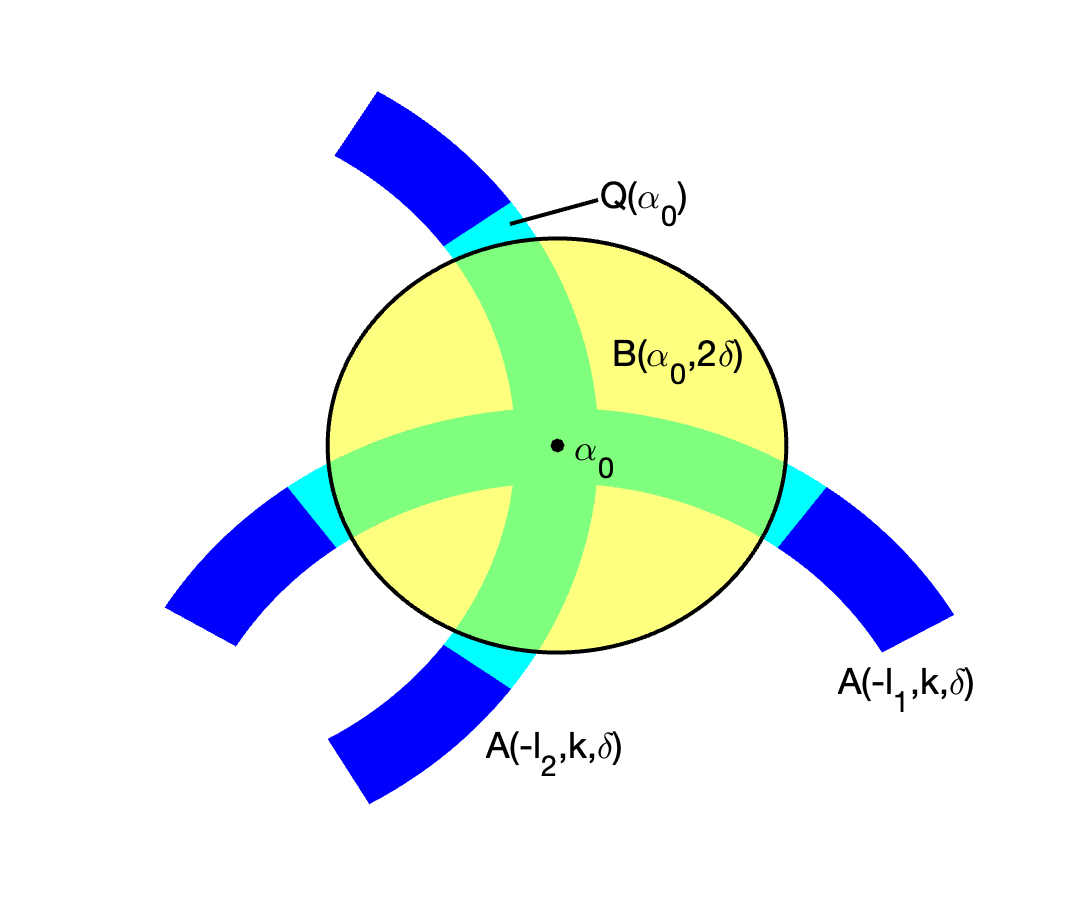}
    \caption{Intersection of two annulus. The light blue partial annuluses  $A(-\bm{\ell}_t,k,\delta,[\xi_t,\eta_t])$ are subsets of blue full annuluses $A(-\bm{\ell}_t,k,\delta)$. The yellow disk is $D(\balpha_0,2\delta)$. The union of light blue partial annuluses is the set $Q(\balpha_0)$.}
    \label{fig:bands}
\end{figure}

Now the inverse Floquet-Bloch transform is written as
\[
u(\bx)=\int_R e^{\i\balpha\cdot \x}w(\balpha,\bx)\d\balpha+\int_Q e^{\i\balpha\cdot \x}w(\balpha,\bx)\d\balpha+\int_{W\setminus(R\cup Q)} e^{\i\balpha\cdot \x}w(\balpha,\bx)\d\balpha.
\]
When $k<0.5$, $Q=\emptyset$ and the domain $W\setminus R$ is the full annulus $A(\bm{0},k,\delta)$. But when $k\geq 0.5$, $Q\neq\emptyset$ and the structure of $W\setminus (R\cup Q)$ becomes more complicated. 
Now we focus on the this domain. First we modify the set $W\setminus(R\cup Q)$. From the distribution of the singular circles (see Figure \ref{fig:curves}), 
\[
\cup_{\ell=1}^M \Big\{\balpha+\j_\ell:\,\balpha\in S(-\j_\ell,k)\cap W\Big\}=S(\bm{0},k).
\]
This result also holds for the annulus, since $\delta<\delta_W$:
\[
\cup_{\ell=1}^M \Big\{\balpha+\j_\ell:\,\balpha\in A(-\j_\ell,k,\delta)\cap W\Big\}=A(\bm{0},k,\delta).
\]
Thus we translate all the partial annulus $A(-\j_\ell,k,\delta)\cap W$ by $\balpha+\j_\ell$, then it becomes a subset of $A(\bm{0},k,\delta)$.

\begin{figure}[h]
    \centering
    \includegraphics[width=0.6\textwidth]{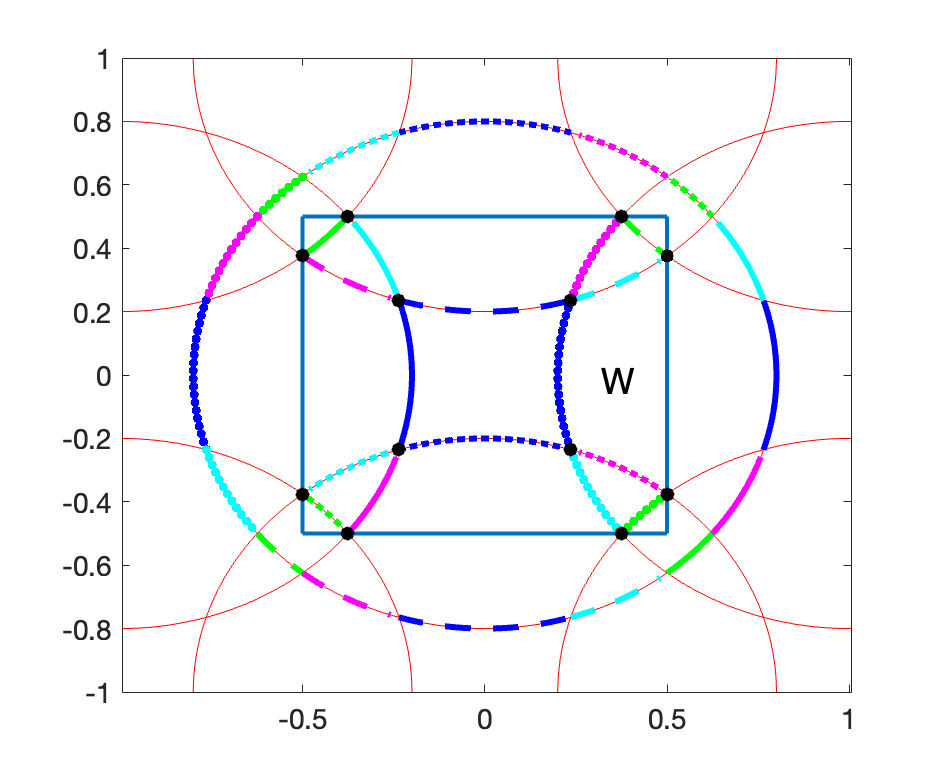}
    \caption{Distribution of the singular curves. The curves with the same color and line style are identical in the sence of periodic tranlation.}
    \label{fig:curves}
\end{figure}

Since the domain $Q$ is defined by the union of finite number of partial annulus, the set, 
\[
\cup_{\ell=1}^M \Big\{\balpha+\j_\ell:\,\balpha\in A(-\j_\ell,k,\delta)\cap W\setminus Q\Big\}
\]
is the union of several non-intersective  partial annuluses, denoted by $A(\bm{0},k,\delta,[\zeta_i,\psi_i])$ where $i=1,2,\dots,n$ for some positive integer $n$. Then from the periodicity of $e^{\i\balpha\cdot \x}w(\balpha,\bx)$ with respect to $\balpha$, finally we get
\[
\int_{W\setminus(R\cup Q)} e^{\i\balpha\cdot \x}w(\balpha,\bx)\d\balpha=\sum_{i=1}^n\int_{A(\bm{0},k,\delta,[\zeta_i,\psi_i])} e^{\i\balpha\cdot \x}w(\balpha,\bx)\d\balpha.
\]
So we define $S=\cup_{t=1}^n A(\bm{0},k,\delta,[\zeta_i,\psi_i])$, then 
\[
u(\bx)=\int_R e^{\i\balpha\cdot \x}w(\balpha,\bx)\d\balpha+\int_Q e^{\i\balpha\cdot \x}w(\balpha,\bx)\d\balpha+\int_{S} e^{\i\balpha\cdot \x}w(\balpha,\bx)\d\balpha.
\]

Finally we modify the integral on any partial annulus $A(\bm{0},k,\delta,[\zeta_i,\psi_i])$. With polar coordinate, 
\[
\int_{A(\bm{0},k,\delta,[\zeta_i,\psi_i])} e^{\i\balpha\cdot \x}w(\balpha,\bx)\d\balpha=\int_{\zeta_i}^{\psi_i}\int_{k-\delta}^{k+\delta}e^{\i s(\cos\phi,\sin\phi)\cdot \x}w(s,\phi,\bx) s\d s\d\phi.
\]
In the form \eqref{eq:neumann}, only the term $\sqrt{k^2-|\balpha+\bm{0}|^2}=\sqrt{k^2-s^2}$ has singularity, thus $w(s,\phi,\cdot)$ has the form of
\[
w(s,\phi,\cdot)=w_0(s,\phi,\cdot)+\sqrt{k^2-s^2} w_1(s,\phi,\cdot),
\]
where $w_0$ and $w_1$ depends analytically on $s\in B(k,\delta)$. Thus $w(s,\phi,\cdot)$ is extended analytically to $s\in B_-(k,\delta)$ due to the new definition of the square root function. For the visualization we refer to Figure \ref{fig:ball}. From Lemma 5 in \cite{Zhang2021b}, we get the modified formulation:
\[
\int_{A(\bm{0},k,\delta,[\zeta_i,\psi_i])} e^{\i\balpha\cdot \x}w(\balpha,\bx)\d\balpha=\int_{\zeta_i}^{\psi_i}\int_{C_-(k,\delta)}e^{\i s(\cos\phi,\sin\phi)\cdot \x}w(s,\phi,\bx) s\d s\d\phi.
\]
Then
\begin{equation}
    \label{eq:ift_mod}
 u(\bx)=\int_R e^{\i\balpha\cdot \x}w(\balpha,\bx)\d\balpha+\int_Q e^{\i\balpha\cdot \x}w(\balpha,\bx)\d\balpha+\sum_{i=1}^n \int_{\zeta_i}^{\psi_i}\int_{C_-(k,\delta)}e^{\i s(\cos\phi,\sin\phi)\cdot \x}w(s,\phi,\bx) s\d s\d\phi.
\end{equation}

\begin{figure}[h]
    \centering
    \includegraphics[width=0.35\textwidth]{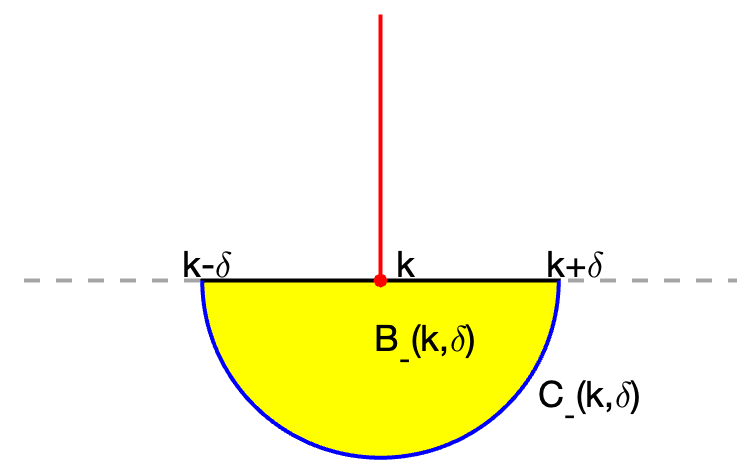}
    \caption{The domain $B_-(k,r)$ and curve $C_-(k,r)$. The red ray is the branch cut for the function $w(s,\phi,\cdot)$ with respect to $s$.}
    \label{fig:ball}
\end{figure}

\section{Approximation by perfectly matched layers}

We add a PML layer above $\Gamma_H$ with thickness $\lambda>0$. The layer is defined by a function $\kappa(x_3)=1+\rho\widehat{\kappa}(x_3)$. Here $\rho>0$ is a parameter, $\widehat{\kappa}(x_3)$ is a sufficiently smooth function:
\[
 \widehat{\kappa}(x_3)=\begin{cases}
     \X\left(\frac{x_3-H}{\lambda}\right)^m,\quad x_3\in[H,H+\lambda],\\
     0,\quad x_3<H,
 \end{cases}
\]
and $\X$ is fixed with positive real and imaginary parts, $m$ a positive integer to guarantee the smoothness of the function. Let
\[
 \sigma:=\int_H^{H+\lambda}\kappa(x_3)\d x_3=\lambda\left(1+\frac{\rho\X}{m+1}\right).
\]

We consider the PML solutions with two different boundary conditions, i.e., the Dirichlet and Neumann boundary condition. For simplicity, we denote the two solutions uniformly as $u^\sigma_I$ where $I=D,N$. The PML solution $u^\sigma_I$ is  described by the following equation with boundary condition on $\Gamma$:
\begin{equation}
 \label{eq:PML}
\widetilde{\nabla}\cdot\widetilde{\nabla}u^\sigma_I+\frac{1}{\kappa(x_3)}\frac{\partial}{\partial x_3}\left(\frac{1}{\kappa(x_3)}\frac{\partial u^\sigma_I}{\partial x_3}\right)+k^2 u^\sigma_I=f\text{ in }\Omega_{H+\lambda};\quad u^\sigma_I=0\text{ on }\Gamma,
\end{equation}
with  the Dirichlet boundary condition
\begin{equation}
   \label{eq:PML_D}  
    u^\sigma_D=0\text{ on }\Gamma_{H+\lambda},
\end{equation}
or the Neumann boundary condition
\begin{equation}
   \label{eq:PML_N}  
   \frac{\partial u^\sigma_N}{\partial x_3}=0\text{ on }\Gamma_{H+\lambda}.
\end{equation}

Note that in the physical domain $\Omega_H$, the PML solution $u^\sigma_I$ satisfies
\[
\Delta u^\sigma_I+k^2 u^\sigma_I=f\text{ in }\Omega_H;\quad u^\sigma_I=0\text{ on }\Gamma.
\]
From \cite{Chand2009}, the transparent boundary condition for $u^\sigma_D$ can be given as follows:
\begin{equation}
 \label{eq:tbc_PML_D}
 \frac{\partial u^\sigma_D}{\partial x_3}=T^{\sigma,D} u^\sigma_D=\i\int_{\R^2}\sqrt{k^2-|\bxi|^2}\coth\left(-\i\sqrt{k^2-|\bxi|^2}\sigma\right)\widehat{u}^\sigma_D(\bxi,H)e^{\i \x\cdot\bxi}\d\bxi.
\end{equation}
Similarly, the transparent boundary condition for $u^\sigma_N$ is given by:
\begin{equation}
 \label{eq:tbc_PML_N}
 \frac{\partial u^\sigma_N}{\partial x_3}=T^{\sigma,N} u^\sigma_N=\i\int_{\R^2}\sqrt{k^2-|\bxi|^2}\tanh\left(-\i\sqrt{k^2-|\bxi|^2}\sigma\right)\widehat{u}^\sigma_N(\bxi,H)e^{\i \x\cdot\bxi}\d\bxi.
\end{equation}
Similar to the convergence result in \cite{Chand2009}, when $|\sigma|$ is sufficiently large, the PML problems are uniquely solvable and 
\[
\|u^\sigma_D-u\|_{H^1(\Omega_H)},\,\|u^\sigma_N-u\|_{H^1(\Omega_H)}\leq C|\sigma|^{-1}.
\]

Now we apply the Floquet-Bloch transform to $u^\sigma_I$. Let $w^\sigma_I(\balpha,x):=\F u^\sigma_I$, then for fixed $\balpha$, $w^\sigma_I(\balpha,\cdot)$ is bi-periodic with respect to $\x$ and satisfies the cell problem:
\begin{equation}
 \label{eq:per_pml}
 \Delta w^\sigma_I(\balpha,\cdot)+2\i \balpha\cdot\widetilde{\nabla}w^\sigma_I(\balpha,\cdot)+(k^2-|\balpha|^2)w^\sigma_I(\balpha,\cdot)=e^{-\i\balpha\cdot\x}f\text{ in }\Omega_H^0;\quad w^\sigma_I(\balpha,\cdot)=0\text{ on }\Gamma_0
\end{equation}
with the transparent boundary condition:
\begin{equation}
 \label{eq:tbc_per_pml}
\frac{\partial w^\sigma_I(\balpha,\cdot)}{\partial x_3}=T^{\sigma,I}_\balpha w^\sigma_I(\balpha,\cdot)=\i \sum_{\j\in\Z}h_I(\balpha,\sigma,\j)\widehat{w}^\sigma_I(\balpha,\j)e^{\i \j\cdot \x}\text{ on }\Gamma_H^0,
\end{equation}
where $$h_D(\balpha,\sigma,\j):=\sqrt{k^2-|\balpha+\j|^2}\coth\left(-\i\sqrt{k^2-|\balpha+\j|^2}\sigma\right)$$ and $$h_N(\balpha,\sigma,\j):=\sqrt{k^2-|\balpha+\j|^2}\tanh\left(-\i\sqrt{k^2-|\balpha+\j|^2}\sigma\right).$$
Similar to \eqref{eq:per}-\eqref{eq:tbc_per}, the problem \eqref{eq:per_pml}-\eqref{eq:tbc_per_pml} is written as the variational problem
\begin{equation}
    \label{eq:per_pml_var}
    \begin{split}
   \int_{\Omega_H^0}\left[\nabla w^\sigma_I(\balpha,\cdot)\cdot\nabla\overline{\phi}-2\i\balpha\cdot\widetilde{\nabla} w^\sigma_I(\balpha,\cdot)\overline{\phi}-(k^2-|\balpha|^2)w^\sigma_I(\balpha,\cdot)\overline{\phi}\right]\d \bx\\
    -4\pi^2 \i\sum_{\j\in\Z^2}h_I(\balpha,\sigma,\j)\widehat{w}^\sigma_I(\balpha,\j)\overline{\widehat{\phi}(\j)}=-\int_{\Omega_H^0}e^{-\i\balpha\cdot \x}f(\bx)\overline{\phi}(\bx)\d \bx.
    \end{split}
\end{equation}
This results in the following operator equation:
\begin{equation}
\left[A(\balpha)-\sum_{\j\in\Z^2}h_I(\balpha,\sigma,\j)B_{\j}\right]w^\sigma_I(\balpha,\cdot):=S^\sigma_I(\balpha)w^\sigma_I(\balpha,\cdot)=G(\balpha,\cdot).
\end{equation}
From \cite{Chand2009}, $S^\sigma_I(\balpha)$ converges to $S(\balpha)$ uniformly with respect to $\alpha\in\overline{W}$. Thus when $|\sigma|>>1$, the problem \eqref{eq:per_pml_var} is uniquely solvable for any $\balpha\in W$. Then the PML solution is given by the inverse Floquet-Bloch transform: 
\begin{equation}
    \label{eq:ifb_pml}
 u^\sigma_I(\bx)=\int_W e^{\i\balpha\cdot \x}w^\sigma_I(\balpha,\bx)\d\balpha,\quad \bx\in\Omega_H.   
\end{equation}
With a similar but easier process as in Section \ref{sec:analytic}, the above formula is rewritten as follows:
\begin{equation}
    \label{eq:ift_pml_mod}
 u^\sigma_I(\bx)=\int_R e^{\i\balpha\cdot \x}w^\sigma_I(\balpha,\bx)\d\balpha+\int_Q e^{\i\balpha\cdot \x}w^\sigma_I(\balpha,\bx)\d\balpha+\sum_{i=1}^n \int_{\zeta_i}^{\psi_i}\int_{C_-(k,\delta)}e^{\i s(\cos\phi,\sin\phi)\cdot \x}w^\sigma_I(s,\phi,\bx) s\d s\d\phi.
\end{equation}

Since for sufficiently large $|\sigma|$, $S^\sigma_I(\balpha)$ is always invertible and $\left\|S^{-\sigma}_I(\balpha)\right\|\leq C$ by adjusting the constant $C$. Let $J(\balpha)=\{\j_1,\dots,\j_m\}$ and define $\widetilde{A}^\sigma_I(\balpha)=A(\balpha)+\sum_{\j\notin J(\balpha_0)}h_I(\balpha,\sigma,\j)B_{\j}$. Within the domain $\widetilde{P}$ defined in the previous section, the solution can  be written as the Neumann series:
\[
w^\sigma_I(\balpha,\cdot)=\sum_{n=0}^\infty\left[\sum_{\ell=1}^m h_I(\balpha,\sigma,\j)\widetilde{B}_{I,\ell}^\sigma(\balpha)\right]^n\widetilde{G}^\sigma_I(\balpha,\cdot),
\] where $\widetilde{B}_{I,\ell}^\sigma=-\widetilde{A}^{-\sigma}_I(\balpha)B_{\j_\ell}$ and $\widetilde{G}^\sigma_I(\balpha,\cdot)=-\widetilde{A}^{-\sigma}_I(\balpha)G(\balpha,\cdot)$. 
In particular, when $k<0.5$, since $P=\partial B(0,k)$,
\begin{equation}
\label{eq:small_w_sigma}
w^\sigma_I(\balpha,\cdot)=\sum_{n=0}^\infty\left[h_I(\balpha,\sigma,\j)\widetilde{B}_{I,0}^\sigma(\balpha)\right]^n\widetilde{G}^\sigma_I(\balpha,\cdot).
\end{equation}

\section{Convergence of the PML}

In this section, we will estimate the error between $u(\bx)$ and $u^\sigma_I(\bx)$. From \eqref{eq:ift_mod} and \eqref{eq:ift_pml_mod}, the difference between these two functions can be bounded by the following separated terms:
\begin{equation}
\label{eq:diff_plit}
\begin{split}
    \left\|u-u^\sigma_I\right\|_{H^1(M)}&\leq \left\|\int_R e^{\i\balpha\cdot\x}\left[w(\balpha,\cdot)-w^\sigma_I(\balpha,\cdot)\right]\d\balpha\right\|_{H^1(M)}\\&+\sum_{\balpha_0\in P:\,\# J(\balpha_0)>1}\left\|\int_{Q(\balpha_0)} e^{\i\balpha\cdot\x}\left[w(\balpha,\cdot)-w^\sigma_I(\balpha,\cdot)\right]\d\balpha\right\|_{H^1(M)}\\
    &+\sum_{i=1}^n\left\|\int_{\zeta_i}^{\psi_i}\int_{C_-(k,\delta)}e^{\i s(\cos\phi,\sin\phi)\cdot \x}\left[w(s,\phi,\cdot)-w^\sigma_I(s,\phi,\cdot)\right]s\d s\d\phi\right\|_{H^1(M)},
    \end{split}
\end{equation}
where $M\subset\Omega_H$ is any fixed compact subset of $\Omega_H$. For simplicity, let $\sigma=\sigma_1+\i\sigma_2=|\sigma|(\cos\theta+\i\sin\theta)$ for some fixed $\theta\in(0,\pi/2)$. 

\subsection{Estimation for the integral on $R$ }

In this section, we estimate the following integral:
\begin{equation}
    \label{eq:ir}
    I_R(\sigma,I):=\left\|\int_R e^{\i\balpha\cdot\x}\left[w(\balpha,\cdot)-w^\sigma_I(\balpha,\cdot)\right]\d\balpha\right\|_{H^1(M)}.
\end{equation}

With the definition that $R=W\setminus\left[\cup_{\ell=1}^M A(-\j_\ell,k,\delta)\right]$ and the fact that $\delta<\delta_W$, for any $\balpha\in R$, 
\[
d(\balpha,S(-\j,k))\geq \delta,\quad\text{ for all }\j\in\Z^2.
\]
To estimate $I_R(\sigma,I)$, we only need to compare the solutions of the periodic problem \eqref{eq:per_var} and the periodic PML problem \eqref{eq:per_pml_var}, when $\balpha\in R$. Since the only difference in the two problems lies in the DtN maps, we only need to estimate
\[
\left\|T^+_{\balpha}-T^{\sigma,I}_{\balpha}\right\|,\quad\text{ where }I=D,N.
\]
From the definitions of the DtN maps, we only need to estimate the following function for any $\bz=\balpha+\j$:
\begin{align*} 
q_D(\bz,\sigma):=h_D(\balpha,\sigma,\j)-\sqrt{k^2-|\balpha+\j|^2}&=\sqrt{k^2-|\bz|^2}\left(\coth\left(-\i\sqrt{k^2-|\bz|^2}\sigma\right)-1\right)\\&=\frac{2\sqrt{k^2-|\bz|^2}}{\exp\left(-\i\sqrt{k^2-|\bz|^2}\sigma\right)-1},
\end{align*}
or
\begin{align*} 
q_N(\bz,\sigma):=h_N(\balpha,\sigma,\j)-\sqrt{k^2-|\balpha+\j|^2}&=\sqrt{k^2-|\bz|^2}\left(\tanh\left(-\i\sqrt{k^2-|\bz|^2}\sigma\right)-1\right)\\&=-\frac{2\sqrt{k^2-|\bz|^2}}{\exp\left(-\i\sqrt{k^2-|\bz|^2}\sigma\right)+1}.
\end{align*}
The values of the two functions are estimated in the following lemma.

\begin{lemma}
    \label{lm:est1}
  The following estimation holds uniformly for $\bz=\balpha+\j$ where $\balpha\in R$ and $\j\in\Z^2$:
    \[
\left|q_I(\bz,\sigma)\right|\leq C\exp\left(-c\sqrt{k\delta} |\sigma|\right),
\]
where $c,C>0$ are independent of $|\sigma|$.
\end{lemma}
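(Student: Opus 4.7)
The plan is to split based on the sign of $k^2 - |\bz|^2$, which decouples the two cases in which the square root is purely real versus purely imaginary, and then use elementary estimates on the exponential in the denominator.

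Since $\balpha \in R = W \setminus \cup_{\ell} A(-\j_\ell, k, \delta)$ keeps distance at least $\delta$ from every singular circle $S(-\j,k)$ (recall $\delta < \delta_W$), every translate $\bz = \balpha + \j$ satisfies $\bigl| |\bz| - k \bigr| \ge \delta$, so either $|\bz| \le k-\delta$ or $|\bz| \ge k+\delta$. I treat these two regimes separately. In the subsonic case $|\bz| \le k - \delta$, the branch cut convention makes $\mu := \sqrt{k^2-|\bz|^2}$ a positive real with $\mu \ge \sqrt{\delta(2k-\delta)} \ge c_0 \sqrt{k\delta}$. In the supersonic case $|\bz| \ge k+\delta$, one has $\sqrt{k^2-|\bz|^2} = \i\nu$ with $\nu := \sqrt{|\bz|^2 - k^2}$ positive real and $\nu \ge \sqrt{\delta(2k+\delta)} \ge c_0 \sqrt{k\delta}$.

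Next I combine this with $\sigma = |\sigma|(\cos\theta + \i\sin\theta)$, $\theta \in (0,\pi/2)$. In the subsonic case, $-\i\mu\sigma = \mu|\sigma|\sin\theta - \i\mu|\sigma|\cos\theta$, so $\bigl|\exp(-\i\mu\sigma)\bigr| = \exp(\mu|\sigma|\sin\theta)$. In the supersonic case, $-\i(\i\nu)\sigma = \nu\sigma$, so $\bigl|\exp(\nu\sigma)\bigr| = \exp(\nu|\sigma|\cos\theta)$. Either way, this modulus grows exponentially with $|\sigma|$, so by the reverse triangle inequality
\[
\bigl|\exp(-\i\sqrt{k^2-|\bz|^2}\sigma) \pm 1\bigr| \;\ge\; \exp\bigl(\mu'|\sigma|\beta\bigr) - 1,
\]
where $(\mu',\beta) = (\mu,\sin\theta)$ or $(\nu,\cos\theta)$ respectively. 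Both $q_D$ and $q_N$ are therefore dominated by $2\mu'/\bigl(\exp(\mu'|\sigma|\beta)-1\bigr)$.

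To convert this into the claimed bound I use the elementary inequality $e^x - 1 \ge x e^{x/2}$ for $x \ge 0$ (which follows from $e^x-1 = 2e^{x/2}\sinh(x/2) \ge x e^{x/2}$). Setting $x = \mu'|\sigma|\beta$, this gives
\[
\frac{\mu'}{\exp(\mu'|\sigma|\beta)-1} \;\le\; \frac{1}{|\sigma|\beta}\exp\bigl(-\mu'|\sigma|\beta/2\bigr) \;\le\; \frac{C}{|\sigma|}\exp\bigl(-c\sqrt{k\delta}\,|\sigma|\bigr),
\]
using $\mu' \ge c_0\sqrt{k\delta}$ and $\beta \ge \min\{\sin\theta,\cos\theta\} > 0$. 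The $1/|\sigma|$ is harmless and can be absorbed by shrinking $c$ slightly.

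The only point requiring care is uniformity in $\j \in \Z^2$: for large $|\bz|$ the parameter $\nu$ is large, and I need to ensure the bound does not blow up with $\nu$. This is exactly where the inequality $\mu'/(e^{\mu'|\sigma|\beta}-1) \le e^{-\mu'|\sigma|\beta/2}/(|\sigma|\beta)$ is doing the work: the right-hand side is monotone decreasing in $\mu'$, so the worst case is $\mu' = c_0\sqrt{k\delta}$, and the bound is uniform over all $\j$. This completes the plan; no technical obstruction is anticipated beyond this uniformity check.
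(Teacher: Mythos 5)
Your proof is correct and follows essentially the same route as the paper: the identical case split $|\bz|\le k-\delta$ versus $|\bz|\ge k+\delta$, the same lower bounds $k^2-|\bz|^2\ge k\delta$ and $|\bz|^2-k^2\ge 2k\delta$, and the same observation that the modulus of $\exp\bigl(-\i\sqrt{k^2-|\bz|^2}\,\sigma\bigr)$ grows like $\exp(c\sqrt{k\delta}\,|\sigma|)$ via $\sin\theta$ or $\cos\theta$ respectively. Your additional step using $e^x-1\ge x\,e^{x/2}$ to control the unbounded numerator $2\sqrt{k^2-|\bz|^2}$ uniformly over $\j\in\Z^2$ is a detail the paper leaves implicit, and it is a welcome clarification rather than a different argument.
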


\begin{proof}
To prove the exponential decay of $q_D$ or $q_N$, we only need to prove the exponential increase of the function $\left|\exp\left(-\i\sqrt{k^2-|\bz|^2}\sigma\right)\right|$. 

Since for any $\j\in\Z^2$, $d(\balpha,S(-\j,k))\geq \delta$, either $|\balpha+\j|=|\bz|\geq k+\delta$ or $|\balpha+\j|=|\bz|\leq k-\delta$. When $|\bz|\geq k+\delta$, then $|\bz^2|-k^2=(|\bz|-k)(|\bz|+k)\geq 2k\delta$. Thus
\[
\left|\exp\left(-\i\sqrt{k^2-|\bz|^2}\sigma\right)\right|=\exp\left(\sqrt{|\bz|^2-k^2}\sigma_1\right)\geq\exp\left(\sqrt{2k\delta}|\sigma|\cos\theta\right). 
\]
When $|\bz|\leq k-\delta$, then $k^2-|\bz|^2=(k-|\bz|)(k+|\bz|)\geq k\delta$. Thus
\[
\left|\exp\left(-\i\sqrt{k^2-|\bz|^2}\sigma\right)\right|=\exp\left(\sqrt{k^2-|\bz|^2}\sigma_2\right)\geq\exp\left(\sqrt{k\delta}|\sigma|\sin\theta\right). 
\]
Since $\theta\in(0,\pi/2)$ is a fixed angle, $\sin\theta,\,\cos\theta>0$. Thus there is a constant $c>0$ such that
\[
\left|\exp\left(-\i\sqrt{k^2-|\bz|^2}\sigma\right)\right|\geq \exp\left(c\sqrt{k\delta}|\sigma|\right)
\]
holds uniformly for both cases. 
The proof is finished.
\end{proof}

Following Theorem 9 in \cite{Zhang2021b}, it is easily proved that $\|T^+_\balpha-T^{\sigma,I}_\balpha\|\leq C\left(c\sqrt{k\delta}|\sigma|\right)$, thus $\|w(\balpha,\cdot)-w^\sigma_I(\balpha,\cdot)\|_{H^1_\p(\Omega_H^0)}\leq C\left(c\sqrt{k\delta}|\sigma|\right).$ Then  it is easy to get the final result in this section.

\begin{theorem}
    \label{th:ir}
    When $\theta\in(0,\pi/2)$ is fixed, $I_R(\sigma,I)\leq C \exp\left(-c\sqrt{k\delta}|\sigma|\right)$ holds for sufficiently large $|\sigma|$ and $I=D,N$.
\end{theorem}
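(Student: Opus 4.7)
The plan is to reduce the vector-valued integral estimate to a pointwise (in $\balpha$) estimate on the periodic problem, which then combines with the finite measure of $R$ to give the result.

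First, I would rewrite the two periodic problems \eqref{eq:per_operator} and its PML counterpart as
\[
S(\balpha)w(\balpha,\cdot) = G(\balpha,\cdot), \qquad S^\sigma_I(\balpha)w^\sigma_I(\balpha,\cdot) = G(\balpha,\cdot),
\]
and observe that the difference of the operators is
\[
S(\balpha) - S^\sigma_I(\balpha) = \sum_{\j\in\Z^2} q_I(\balpha+\j,\sigma)\, B_{\j},
\]
so the discrepancy is entirely encoded in the symbol $q_I(\balpha+\j,\sigma)$ of the DtN difference on $\Gamma_H^0$. Since $B_\j$ acts on the $\j$-th Fourier mode of the trace on $\Gamma_H^0$, the operator norm of this difference from $H^1_\p(\Omega_H^0)$ into its dual is controlled by $\sup_{\j}|q_I(\balpha+\j,\sigma)|$ (the trace map being bounded, and each Fourier mode handled independently). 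By Lemma \ref{lm:est1}, this supremum is bounded by $C\exp(-c\sqrt{k\delta}|\sigma|)$ uniformly in $\balpha\in R$.

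Next I would invoke stability of inversion. The original operator $S(\balpha)$ is uniformly invertible on $\overline{W}$ with $\|S^{-1}(\balpha)\|\le C$. Since $\|S(\balpha) - S^\sigma_I(\balpha)\|$ is exponentially small in $|\sigma|$, for $|\sigma|$ large enough $S^\sigma_I(\balpha)$ is invertible with a uniformly bounded inverse, via the Neumann-series perturbation argument. Then the identity
\[
w(\balpha,\cdot) - w^\sigma_I(\balpha,\cdot) = S^{-1}(\balpha)\bigl(S^\sigma_I(\balpha) - S(\balpha)\bigr) (S^\sigma_I)^{-1}(\balpha)G(\balpha,\cdot)
\]
together with the uniform bound $\|G(\balpha,\cdot)\|\le C\|f\|_{L^2(\Omega_H^0)}$ yields
\[
\|w(\balpha,\cdot) - w^\sigma_I(\balpha,\cdot)\|_{H^1_\p(\Omega_H^0)} \le C\exp\!\bigl(-c\sqrt{k\delta}\,|\sigma|\bigr)
\]
uniformly in $\balpha\in R$. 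This is exactly the argument referenced via Theorem 9 of \cite{Zhang2021b}.

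Finally, to convert this into an estimate on $I_R(\sigma,I)$, I would apply Minkowski's integral inequality (or, equivalently, continuity of the partial inverse Floquet-Bloch transform in Theorem \ref{th:fb} restricted to $R$):
\[
I_R(\sigma,I) \le C \int_R \|w(\balpha,\cdot) - w^\sigma_I(\balpha,\cdot)\|_{H^1_\p(\Omega_H^0)}\, \d\balpha \le C\,|R|\exp\!\bigl(-c\sqrt{k\delta}\,|\sigma|\bigr),
\]
absorbing $|R|\le 1$ into $C$. The $H^1(M)$ norm rather than the $H^1_r$ norm is used because $M\subset\Omega_H$ is compact, so the weight is bounded on $M$ and the embedding $H^1_r(\Omega_H)\hookrightarrow H^1(M)$ is continuous.

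The only delicate point is the first step: turning the symbol-level bound from Lemma \ref{lm:est1} into an operator-norm bound for $T^+_\balpha-T^{\sigma,I}_\balpha$ acting on the trace space. Large $|\j|$ must be handled carefully, but since $|q_I(\balpha+\j,\sigma)|$ in fact \emph{improves} as $|\j|\to\infty$ (the exponent $\sqrt{|\balpha+\j|^2-k^2}|\sigma|\cos\theta$ grows with $|\j|$ and dominates the algebraic prefactor), the supremum in $\j$ is attained for bounded $|\j|$ and the Parseval-type estimate is straightforward. Everything else is a routine application of perturbation theory plus integration over a set of finite measure.
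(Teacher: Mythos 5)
Your proposal is correct and follows essentially the same route as the paper: the symbol estimate of Lemma \ref{lm:est1} is converted into an operator-norm bound on $T^+_\balpha-T^{\sigma,I}_\balpha$, a Neumann-series perturbation argument plus the resolvent identity gives the uniform pointwise bound $\|w(\balpha,\cdot)-w^\sigma_I(\balpha,\cdot)\|_{H^1_\p(\Omega_H^0)}\leq C\exp(-c\sqrt{k\delta}|\sigma|)$ on $R$, and integration over the finite-measure set $R$ concludes. The paper delegates the middle steps to Theorem 9 of \cite{Zhang2021b} and states the conclusion without detail; you have simply filled in those same steps explicitly.
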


The following results are concluded as a corollary of Lemma \ref{lm:est1}.

\begin{corollary}
    \label{cr:est_operator}Suppose $\delta<\delta_W$. 
For any $\balpha\in W$,   the operator $\widetilde{A}^\sigma_I(\balpha)$ (where $I=D,N$) converges to $\widetilde{A}(\balpha)$ exponentially:
    \[
\left\|\widetilde{A}^\sigma_I(\balpha)-\widetilde{A}(\balpha)\right\|\leq C\exp\left(-c\sqrt{k\delta}|\sigma|\right).
    \]
    Then the following estimations are obtained directly:
    \[
\left\|\widetilde{B}^\sigma_{I,\ell}(\balpha)-\widetilde{B}_\ell(\balpha)\right\|,\,\left\|\widetilde{G}^\sigma_I(\balpha,\cdot)-\widetilde{G}(\balpha,\cdot)\right\|\leq C\exp\left(-c\sqrt{k\delta}|\sigma|\right).
    \]    
\end{corollary}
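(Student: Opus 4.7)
The plan is to reduce the two claims to the pointwise bound on $q_I(\bz,\sigma)$ already established in Lemma \ref{lm:est1}, and then to pass from $\widetilde{A}$ to $\widetilde{B}$ and $\widetilde{G}$ via a standard resolvent identity.

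First I would compare $\widetilde{A}^\sigma_I(\balpha)$ with $\widetilde{A}(\balpha)$ term by term. From their definitions, the two operators share the analytic piece $A(\balpha)$ and differ only in the infinite sum over $\j\notin J(\balpha_0)$, so
\[
\widetilde{A}^\sigma_I(\balpha) - \widetilde{A}(\balpha) \;=\; -\sum_{\j\notin J(\balpha_0)} q_I(\balpha+\j,\sigma)\, B_{\j}.
\]
The key structural point is that every index $\j$ appearing in the sum satisfies $|\balpha_0+\j|\neq k$; by the hypothesis $\delta<\delta_W$, for $\balpha$ in the relevant neighbourhood we have $\bigl||\balpha+\j|-k\bigr|\geq\delta$ for all such $\j$, exactly the situation assumed in Lemma \ref{lm:est1}. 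That lemma therefore applies uniformly and yields $|q_I(\balpha+\j,\sigma)|\leq C\exp(-c\sqrt{k\delta}|\sigma|)$. For $\balpha\in R$ this is what Theorem \ref{th:ir} already uses; the extension to arbitrary $\balpha\in W$ is handled by the finite cover of $W$ by neighbourhoods of the points $\balpha_0\in P$ constructed earlier in the paper.

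Next I would upgrade the pointwise bound to an operator-norm bound. For bounded $|\j|$ only finitely many terms contribute; for $|\j|$ large, the exponent $\sqrt{|\balpha+\j|^2-k^2}\gtrsim|\j|$ in the definitions of $q_D$ and $q_N$ supplies an additional factor $\exp(-c|\j|\,|\sigma|\cos\theta)$, which dominates any polynomial growth of $\|B_{\j}\|$ as an operator between the relevant Sobolev spaces. The series then converges absolutely in operator norm and produces the first estimate of the corollary, uniformly in $\balpha$. To deduce the bounds on $\widetilde{B}^\sigma_{I,\ell}-\widetilde{B}_\ell$ and $\widetilde{G}^\sigma_I-\widetilde{G}$, I would invoke the resolvent identity
\[
\widetilde{A}^{-\sigma}_I(\balpha) - \widetilde{A}^{-1}(\balpha) \;=\; \widetilde{A}^{-\sigma}_I(\balpha)\bigl[\widetilde{A}(\balpha)-\widetilde{A}^\sigma_I(\balpha)\bigr]\widetilde{A}^{-1}(\balpha).
\]
Both $\widetilde{A}^{-1}(\balpha)$ and, for $|\sigma|\gg 1$, $\widetilde{A}^{-\sigma}_I(\balpha)$ are uniformly bounded, so the exponential bound on $\widetilde{A}^\sigma_I-\widetilde{A}$ transfers immediately to the difference of resolvents; composing on the right with $B_{\j_\ell}$ or with $G(\balpha,\cdot)$ (each of uniformly bounded norm in $\balpha$) gives the remaining two estimates.

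The main obstacle is the infinite summability over $\j\in\Z^2$: one has to verify that the norms of the high-frequency blocks $B_{\j}$ do not grow faster than the decay of $|q_I(\balpha+\j,\sigma)|$. A careful bookkeeping of the exponent in Lemma \ref{lm:est1} shows this is not a genuine problem, but it is the only step beyond a one-line application of the lemma.
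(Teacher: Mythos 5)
Your proposal is correct and follows essentially the route the paper intends: the paper offers no written proof, simply declaring the result "a corollary of Lemma \ref{lm:est1}," and your argument — writing $\widetilde{A}^\sigma_I(\balpha)-\widetilde{A}(\balpha)$ as the sum of $q_I(\balpha+\j,\sigma)B_{\j}$ over $\j\notin J(\balpha_0)$, applying the lemma uniformly, summing in operator norm, and transferring the bound to the inverses via the resolvent identity — is exactly the intended reasoning, made explicit. Your remarks on the summability over large $|\j|$ (where the extra decay $\exp(-c|\j||\sigma|\cos\theta)$ takes over) and on the uniform invertibility of $\widetilde{A}$ and $\widetilde{A}^\sigma_I$ for large $|\sigma|$ fill in precisely the details the paper leaves implicit.
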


\subsection{Estimation for the integral on $S$}

From the formulas \eqref{eq:ift_mod} and \eqref{eq:ift_pml_mod}, the integral on the set $S$ is equivalent to the sum of integrals on the modified partial annulus. Thus we define
\begin{equation}
    \label{eq:is}I_S^i(\sigma,I)=\left\|\int_{\zeta_i}^{\psi_i}\int_{C_-(k,\delta)}e^{\i s(\cos\phi,\sin\phi)\cdot \x}\left[w(s,\phi,\cdot)-w^\sigma_I(s,\phi,\cdot)\right]s\d s\d\phi\right\|_{H^1(M)}.
\end{equation}
Due to the complex variable $s$, the notation $|\bz|^2$ is replaced by $\bz_1^2+\bz_2^2$ in this section.

We need the following assumption to prove the convergence result.

\begin{assumption}
    \label{asp}
    The angle $\theta$ satisfies
    \[
\theta\in\left(\frac{\pi}{8},\frac{\pi-\arctan 2}{2}\right).
    \]
\end{assumption}

From the polar coordinate, $\balpha=(\balpha_1,\balpha_2):=\bm{a}+\i\bm{b}$ satisfies
\[
\balpha_1= s\cos\phi,\,\balpha_2=s\sin\phi\]
where $s\in C_-(k,\delta)$ and $\phi$ lies in one interval $[\zeta_i,\psi_i]$. Then $\bm{a}=\Re(\balpha)\in A(\bm{0},k,\delta,[\zeta_i,\psi_i])$ and $\bm{b}=\Im(\balpha)$. 
From the definition of the partial annulus, it is obvious that for any $\j\in\Z^2\setminus\bm\{0\}$,
\[
d(\bm{a}, S(-\j,k))\geq \delta.
\]
Similar to the previous section, we still need to estimate the functions $q_I(\bz,\sigma)$ when $\bz=\balpha+\j$ where $\balpha$ is given as above. The estimation is given in the following lemma, which is mainly based on the proof from  Lemma 8, \cite{Zhang2021b}.

\begin{lemma}
    \label{lm:est2}Assumption \ref{asp} holds.
  The following estimation holds uniformly for $\bz=\balpha+\j$ where $\balpha=(s\cos\phi,s\sin\phi)$ with $s\in C_-(k,\delta)$, $\phi\in[a_t,b_t]$ and $\j\in\Z^2$:
    \[
\left|q_I(\bz,\sigma)\right|\leq C\exp\left(-c\sqrt{k\delta} |\sigma|\right).
\]
\end{lemma}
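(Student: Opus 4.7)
Following the template of Lemma 8 in \cite{Zhang2021b}, I would reduce the estimate on $|q_I(\bz,\sigma)|$ to the lower bound
\[
\left|\exp\left(-\i\sqrt{k^2-z_1^2-z_2^2}\,\sigma\right)\right|\;\geq\;\exp\left(c\sqrt{k\delta}\,|\sigma|\right).
\]
Setting $\zeta:=k^2-z_1^2-z_2^2$, $\sqrt{\zeta}=|\sqrt{\zeta}|e^{\i\psi}$ (with the new branch cut on the negative imaginary axis), and $\sigma=|\sigma|e^{\i\theta}$, this is equivalent to $|\sqrt{\zeta}|\sin(\psi+\theta)\gtrsim\sqrt{k\delta}$, so the plan is to prove the two facts $|\zeta|\gtrsim k\delta$ and $\sin(\psi+\theta)\geq c>0$ uniformly, extracting the latter from Assumption \ref{asp}.

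I would split on whether $\j=\bm{0}$ or not. If $\j=\bm{0}$, then $\zeta=k^2-s^2$ with $s\in C_-(k,\delta)$; parametrizing $s=k+\delta e^{\i\theta'}$ with $\theta'\in(-\pi,0)$ gives $\zeta=-\delta e^{\i\theta'}(2k+\delta e^{\i\theta'})$, so $|\zeta|\geq 2k\delta-\delta^2$ and $\arg\zeta\in(0,\pi)$ for $\delta$ small, whence $\psi\in(0,\pi/2)$ and Assumption \ref{asp} yields $\psi+\theta\in(\pi/8,\pi-(\arctan 2)/2)\subset(0,\pi)$. If $\j\neq\bm{0}$ I would write $s=s_1+\i s_2$ and $t:=j_1\cos\phi+j_2\sin\phi\in\R$, obtaining
\[
\zeta\;=\;\bigl(k^2-|\bm{a}+\j|^2+s_2^2\bigr)\;-\;2\i s_2(s_1+t),
\]
where $\bm{a}=s_1(\cos\phi,\sin\phi)=\Re(\balpha)$. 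Since the partial annulus is chosen disjoint from $Q$, we have $d(\bm{a},S(-\j,k))\geq\delta$, so either $|\bm{a}+\j|\leq k-\delta$ (interior case) or $|\bm{a}+\j|\geq k+\delta$ (exterior case). In the interior case, $\Re\zeta\geq 2k\delta-\delta^2+s_2^2\gtrsim k\delta$, and the Cauchy--Schwarz bound $|s_1+t|=|(\cos\phi,\sin\phi)\cdot(\bm{a}+\j)|\leq k-\delta$ together with $|s_2|\leq\delta$ forces $|\Im\zeta|/\Re\zeta<1$, so $|\psi|<\pi/8$, and the condition $\theta>\pi/8$ in Assumption \ref{asp} yields $\sin(\psi+\theta)\geq c>0$. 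In the exterior case, $|\Re\zeta|\geq 2k\delta$ with $\Re\zeta<0$; a parallel estimate gives $|\Im\zeta|/|\Re\zeta|\leq 1+\delta/k<2$, hence $|\arg\zeta-\pi|<\arctan 2$ and $\psi\in\bigl((\pi-\arctan 2)/2,\,(\pi+\arctan 2)/2\bigr)$. The upper bound $\theta<(\pi-\arctan 2)/2$ from Assumption \ref{asp} is precisely what keeps $\psi+\theta<\pi$ strictly, while $\theta>\pi/8$ keeps it above $0$. Since $|\sqrt{\zeta}|\geq\sqrt{|\Re\zeta|}\gtrsim\sqrt{k\delta}$ in every case, combining the two facts yields the claim.

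The main obstacle is the exterior sub-case with $s_1+t<0$, which pushes $\psi$ all the way up to $(\pi+\arctan 2)/2$; Assumption \ref{asp}'s sharp upper endpoint $(\pi-\arctan 2)/2$ for $\theta$ is tight and is exactly what prevents $\psi+\theta$ from reaching $\pi$. The remaining algebraic work mirrors the 2D calculation in Lemma 8 of \cite{Zhang2021b}; the one genuinely new ingredient is the dot-product bound $|s_1+t|\leq|\bm{a}+\j|$, which replaces the scalar bound used in two dimensions and is what controls the $\j$-dependent part of the argument of $\zeta$.
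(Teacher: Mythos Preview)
Your proposal is correct and follows essentially the same route as the paper's proof: both reduce the bound on $q_I$ to showing $|\sqrt{\zeta}|\sin(\psi+\theta)\gtrsim\sqrt{k\delta}$, split on $\j=\bm{0}$ versus $\j\neq\bm{0}$, and in the latter case further subdivide into $|\bm{a}+\j|\leq k-\delta$ and $|\bm{a}+\j|\geq k+\delta$, controlling the argument of $\zeta$ via the Cauchy--Schwarz bound $|(\bm{a}+\j)\cdot\bm{b}|\leq|\bm{a}+\j||\bm{b}|$ (which is exactly your $|s_1+t|\leq|\bm{a}+\j|$). Your exterior estimate $|\Im\zeta|/|\Re\zeta|\leq 1+\delta/k$ is in fact slightly sharper than the paper's bound $|\tan(2\tau)|\leq 2$, but both place $\psi$ in $\bigl((\pi-\arctan 2)/2,(\pi+\arctan 2)/2\bigr)$ and invoke Assumption~\ref{asp} in the same way.
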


\begin{proof}
Similar to the proof of Lemma \ref{lm:est1}, we still need to prove the exponential increase of the function $\left|\exp\left(-\i\sqrt{k^2-s^2}\sigma\right)\right|$. Note that here $\bz_1^2+\bz_2^2=s^2$, then $\sqrt{k^2-z_1^2-z_2^2}=\sqrt{k^2-s^2}$. For simplicity, let $\sqrt{k^2-s^2}=d e^{\i\tau}$.
We need to consider the following two cases separately.

\noindent
i) When $\j=\bm{0}$. Let $s=k-\delta e^{\i t}$ where $t\in[0,\pi]$. 
When $j=0$, then
\[
k^2-s^2=2k \delta e^{\i t}-\delta^2 e^{2\i t}.
\]
From direct computation, 
\[
d\geq\sqrt{2k \delta-\delta^2}>\sqrt{k\delta}\text{ and }\tau\in\left[0,\frac{\pi}{2}\right].
\]
Since $\theta\in\left(0,\frac{\pi}{2}\right)$, there is a $\gamma_1>0$ such that $\sin(\tau+\theta)\geq \gamma_1$. Thus
\[
\left|\exp\left(-\i\sqrt{k^2-\bz^2}\sigma\right)\right|\geq\exp(\sqrt{k\delta}\gamma_1|\sigma|).
\]

\noindent
ii) When $\j\neq\bm{0}$. Since $d(\bm{a},S(-\j,k))\geq \delta$, either $|\bm{a}+\j|\geq k+\delta$ or $|\bm{a}+\j|\leq k-\delta$. Moreover, $|\bm{b}|\leq\delta$. From direct computation,
\[
k^2-\bz_1^2-\bz_2^2=k^2-|\bm{a}+\j|^2+|\bm{b}|^2-2\i(\bm{a}+\j)\cdot\bm{b}
\]
When $|\bm{a}+\j|\geq k+\delta$, 
\[
d\geq\sqrt{|\bm{a}+\j|^2-|\bm{b}|^2-k^2}\geq\sqrt{(k+\delta)^2-k^2-\delta^2}=\sqrt{2k\delta},
\]
and
\[
\left|\tan(2\tau)\right|=\left|\frac{2(\bm{a}+\j)\cdot\bm{b}}{|\bm{a}+\j|^2-|\bm{b}|^2-k^2}\right|\leq \frac{2\delta|\bm{a}+\j|}{\delta(k+|\bm{a}+\j|)-\delta^2}\leq 2.
\]
This implies that \[\tau\in\left[\frac{\pi-\arctan 2}{2},\frac{\pi+\arctan 2}{2}\right].\]

When $|\bm{a}+\j|\leq k-\delta$,
\[
d\geq\sqrt{k^2+b^2-|\bm{a}+\j|^2}\geq\sqrt{k^2-(k-\delta)^2}\geq \sqrt{k\delta},
\]
and
\[
\left|\tan(2\tau)\right|=\left|\frac{2(\bm{a}+\j)\cdot\bm{b}}{|\bm{b}|^2+k^2-|\bm{a}+\j|^2}\right|\leq \frac{2(k-\delta)\delta}{k^2-(k-\delta)^2}<1.
\]
This implies that
\[
\tau\in\left(-\frac{\pi}{8},\frac{\pi}{8}\right).
\]
For both cases, when Assumption \ref{asp} holds, there is a constant $\gamma_2>0$ such that $\sin(\tau+\theta)\geq\gamma_2$. Thus 
\[
\left|\exp\left(-\i\sqrt{k^2-\bz_1^2-\bz_2^2}\right)\right|\geq \exp\left(\sqrt{k\delta}\gamma_2|\sigma|\right).
\]

The proof is finished by combing the results from (i) and (ii).
\end{proof}

With above result, we can also prove that $\left\|T^+_{\balpha}-T^{\sigma,I}_{\balpha}\right\|\leq C\exp\left(-c\sqrt{k\delta}|\sigma|\right)$ holds uniformly for 
$\balpha=(s\cos\phi,s\sin\phi)$ where $s\in C_-(k,\delta)$ and $\phi\in[a_t,b_t]$. Thus we can also prove $\left\|w^\sigma_I(\balpha,\cdot)-w(\balpha,\cdot)\right\|_{H^1_\p(\Omega_H^0)}\leq C\exp\left(-c\sqrt{k\delta}|\sigma|\right)$.
Finally we arrive at the main result in this section.

\begin{theorem}
    \label{th:is}
    Assumption \ref{asp} is satisfied, then
    $I_S(\sigma,I)\leq C \exp\left(-c\sqrt{k\delta}|\sigma|\right)$ holds for sufficiently large $|\sigma|$ and $I=D,N$.
\end{theorem}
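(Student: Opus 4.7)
The plan is to reduce the statement to Lemma \ref{lm:est2} in three steps: (i) upgrade the pointwise symbol estimate to a uniform operator-norm estimate on the DtN maps, (ii) transfer this to an $H^1_\p$-norm estimate on $w(s,\phi,\cdot)-w^\sigma_I(s,\phi,\cdot)$ by a uniform coercivity/perturbation argument for the complexified cell problem, (iii) integrate the resulting bound against the deformed contour and finite angular intervals, absorbing the bounded geometric factors into the constant.

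For step (i), I would use the Parseval-type formulation of $T^+_\balpha-T^{\sigma,I}_\balpha$ implicit in \eqref{eq:tbc_per}--\eqref{eq:tbc_per_pml}: its symbol on the $\j$-th Fourier mode is exactly $q_I(\balpha+\j,\sigma)$. Since Lemma \ref{lm:est2} gives $|q_I(\balpha+\j,\sigma)|\le C\exp(-c\sqrt{k\delta}|\sigma|)$ uniformly in $\j\in\Z^2$ and in $(s,\phi)$ with $s\in C_-(k,\delta)$, $\phi\in[\zeta_i,\psi_i]$, the same argument as Theorem~9 of \cite{Zhang2021b} gives
\[
\bigl\|T^+_\balpha - T^{\sigma,I}_\balpha\bigr\|_{H^{1/2}(\Gamma_H^0)\to H^{-1/2}(\Gamma_H^0)}\le C\exp\bigl(-c\sqrt{k\delta}|\sigma|\bigr).
\]

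For step (ii), the complexified operator $S(\balpha)$ remains boundedly invertible for $\balpha$ in the complex neighbourhoods under consideration, because the analytic extension performed in Section~\ref{sec:analytic} identifies $w(s,\phi,\cdot)$ as an analytic function of $s\in B(k,\delta)\setminus(-\i[0,\infty))$ with a uniform bound (by compactness of $\overline{C_-(k,\delta)}$ and the finite union of angular sectors). A standard Neumann series argument, using Corollary \ref{cr:est_operator} in combination with the operator-norm bound from step (i), gives
\[
\bigl\|w(s,\phi,\cdot)-w^\sigma_I(s,\phi,\cdot)\bigr\|_{H^1_\p(\Omega_H^0)}\le C\exp\bigl(-c\sqrt{k\delta}|\sigma|\bigr),
\]
uniformly in $s\in C_-(k,\delta)$ and $\phi\in[\zeta_i,\psi_i]$.

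For step (iii), I would bound
\[
I_S^i(\sigma,I)\le \int_{\zeta_i}^{\psi_i}\int_{C_-(k,\delta)}\bigl|e^{\i s(\cos\phi,\sin\phi)\cdot\x}\bigr|\,\bigl\|w-w^\sigma_I\bigr\|_{H^1(M)}\,|s|\,|\d s|\,\d\phi.
\]
On $C_-(k,\delta)$, $|\Im s|\le\delta$ and $\x$ ranges over the projection of the compact set $M$, so the exponential factor is bounded uniformly; $|s|\le k+\delta$, the arc length of $C_-(k,\delta)$ is $\pi\delta$, and $\psi_i-\zeta_i\le 2\pi$. Plugging in the bound from step (ii), and summing over the finite index $i=1,\dots,n$, yields the desired estimate $I_S(\sigma,I)\le C\exp(-c\sqrt{k\delta}|\sigma|)$.

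The main obstacle is step (ii): one has to justify that the complexified cell problem (with $\balpha=(s\cos\phi,s\sin\phi)$, $s\in C_-(k,\delta)$) is still uniformly well-posed, and that the Neumann series expansion used in \eqref{eq:neumann}--\eqref{eq:neu_1} survives perturbation by the (exponentially small) difference between $S(\balpha)$ and $S^\sigma_I(\balpha)$. Everything else is a direct lift of the 2D argument from \cite{Zhang2021b} combined with Lemma \ref{lm:est2}.
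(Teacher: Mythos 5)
Your proposal follows essentially the same route as the paper: invoke Lemma \ref{lm:est2} to bound the symbol $q_I(\balpha+\j,\sigma)$ uniformly on the deformed contour, lift this (via the argument of Theorem 9 in \cite{Zhang2021b}) to an exponential bound on $\|T^+_\balpha-T^{\sigma,I}_\balpha\|$ and hence on $\|w(s,\phi,\cdot)-w^\sigma_I(s,\phi,\cdot)\|_{H^1_\p(\Omega_H^0)}$, and then integrate over the finite contour and angular intervals. The uniform well-posedness of the complexified problem that you flag as the main obstacle is exactly what the paper's Section \ref{sec:analytic} supplies through the analytic extension of the Neumann series, so your outline is correct and complete.
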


\subsection{Estimation for the integral on $Q$}

Now we consider the following value:
\begin{equation}
    \label{eq:iq}
    I_{Q(\balpha_0)}(\sigma,I):=\left\|\int_{Q(\balpha_0)} e^{\i\balpha\cdot\x}\left[w(\balpha,\cdot)-w^\sigma_I(\balpha,\cdot)\right]\d\balpha\right\|_{H^1(M)},
\end{equation}
where $\balpha_0$ is a point such that $\# J(\balpha_0)=m>1$. Assume that $J(\balpha_0)=\{\j_\ell:\,\ell=1,2,\dots,m\}$, then $Q(\balpha_0)$ is the union of $m$ partial annuluses:
\[
Q(\balpha_0)=\cup_{\ell=1}^m A(-\j_\ell,k,\delta,[\xi_\ell,\eta_\ell]).
\]
In this section, we only consider the case that $m=2$ as an example to simplify the notations. For $m>2$ everything is very similar.

Recall that from  the Neumann series \eqref{eq:neumann},
\[
w(\balpha,\cdot)=\sum_{m,n=0}^\infty\sqrt{k^2-|\balpha+\j_1|^2}^m\sqrt{k^2-|\balpha+\j_2|^2}^{n}\O\left(\widetilde{B}_{\j_1}(\balpha),\widetilde{B}_{\j_2}(\balpha),m,n\right)\widetilde{G}(\balpha,\cdot),
\]
which contains term with $\sqrt{k^2-|\balpha+\j_1|^2}^m\sqrt{k^2-|\balpha+\j_2|^2}^n$. At the same time, 
\[
w^\sigma_I(\balpha,\cdot)=\sum_{m,n=0}^\infty h_I^m(\balpha,\sigma,\j_1)h_I^{n}(\balpha,\sigma,\j_2)\O\left(\widetilde{B}_{I,\j_1}^\sigma(\alpha),\widetilde{B}_{I,\j_2}^\sigma(\balpha),m,n\right)\widetilde{G}^\sigma_I(\balpha,\cdot).
\]
Define the following functions with $\ell=1,2$:
\begin{eqnarray*}
    && v_\ell(\balpha,\cdot):=\sum_{n=0}^\infty\sqrt{k^2-|\balpha+\j_\ell|^2}^n \widetilde{B}_{\j_\ell}^n(\balpha)\widetilde{G}(\balpha,\cdot);\\
    && v^\sigma_{I,\ell}(\balpha,\cdot):=\sum_{n=0}^\infty h_I^n(\balpha,\sigma,\j_1)\left(\widetilde{B}_{I,\j_\ell}^\sigma(\balpha)\right)^n\widetilde{G}^\sigma_I(\balpha,\cdot).
\end{eqnarray*}
With these functions, we  define
\begin{align*}
    v_0(\balpha,\cdot)&=w(\balpha,\cdot)+\widetilde{G}(\balpha,\cdot)-v_1(\balpha,\cdot)-v_2(\balpha,\cdot)\\
    &=\sum_{m,n=1}^\infty\sqrt{k^2-|\balpha+\j_1|^2}^m\sqrt{k^2-|\balpha+\j_2|^2}^{n}\O\left(\widetilde{B}_{\j_1}(\balpha),\widetilde{B}_{\j_2}(\balpha),m,n\right)\widetilde{G}(\balpha,\cdot)
\end{align*}
and
\begin{align*}
    v^\sigma_{I,0}(\balpha,\cdot)&=w^\sigma_{I}(\balpha,\cdot)+\widetilde{G}^\sigma_{I}(\balpha,\cdot)-v^\sigma_{I,1}(\balpha,\cdot)-v^\sigma_{I,2}(\balpha,\cdot)\\
    &=\sum_{m,n=1}^\infty h^m(\balpha,\sigma,\j_1)h^n(\balpha,\sigma,\j_2)\O\left(\widetilde{B}^\sigma_{I,\j_1}(\balpha),\widetilde{B}^\sigma_{I,\j_2}(\balpha),m,n\right)\widetilde{G}^\sigma_I(\balpha,\cdot).
\end{align*}
From the definitions of the new functions, \eqref{eq:iq} can be bounded by:
\begin{align*}
    I_Q(\sigma,I)&\leq \left\|\int_{Q(\balpha_0)} e^{\i\balpha\cdot\x}\left[\widetilde{G}(\balpha,\cdot)-\widetilde{G}^\sigma_I(\balpha,\cdot)\right]\d\balpha\right\|_{H^1(M)}+\left\|\int_{Q(\balpha_0)} e^{\i\balpha\cdot\x}\left[v_0(\balpha,\cdot)-v^\sigma_{I,0}(\balpha,\cdot)\right]\d\balpha\right\|_{H^1(M)}\\&+\left\|\int_{Q(\balpha_0)} e^{\i\balpha\cdot\x}\left[v_1(\balpha,\cdot)-v^\sigma_{I,1}(\balpha,\cdot)\right]\d\balpha\right\|_{H^1(M)}+\left\|\int_{Q(\balpha_0)} e^{\i\balpha\cdot\x}\left[v_2(\balpha,\cdot)-v^\sigma_{I,2}(\balpha,\cdot)\right]\d\balpha\right\|_{H^1(M)}\\
    &:=(I)+(II)+(III)+(IV).
    \end{align*}
The exponential convergence of the first term is obtained directly from Corollary \ref{cr:est_operator}.  In the rest of this section, we first estimate terms (III) and (IV), and then deal with  term (II).

\subsubsection{Estimation of terms (III) and (IV)}

Take (III) as an example. From Neumann series, $v_1(\balpha,\cdot)$ solves the equation
\[
\left[\widetilde{A}(\balpha)-\sqrt{k^2-|\balpha+\j_1|^2}B_{\j_1}\right]v_1(\balpha,\cdot)=G(\balpha,\cdot).
\]
Similary, $v^\sigma_{I,1}(\balpha,\cdot)$ solves
\[
\left[\widetilde{A}^\sigma_I(\balpha)-h(\balpha,\sigma,\j_1) B^\sigma_{I,\j_1}\right]v^\sigma_{I,1}(\balpha,\cdot)=G(\balpha,\cdot).
\]
Recall that the domain $Q(\balpha_0)$ contains the intersection of two circles $S(-\j_1,k)$ and $S(-\j_2,k)$. From the definition of $\widetilde{A}(\balpha)$, it does not contain any singularity within the domain $Q(\balpha_0)$; while the square root only contains one singularity on the circle $S(-\j_1,k)$. We study the singularity of $v_1(\balpha,\cdot)$ with respect to $\balpha$. Since there is only one square root singularity in the operator, $v_1(\balpha,\cdot)$ has the form \eqref{eq:neu_1} in a small neighbourhood of $S(-\j_1,k)$, i.e.,
\[
v_1(\balpha,\cdot)=v_{0,1}(\balpha,\cdot)+\sqrt{k^2-|\balpha+\j_1|^2}v_{1,1}(\balpha,\cdot).
\]
Let $\balpha=s(\cos\phi,\sin\phi)-\j_1$ where $\phi\in[\xi_1,\eta_1]$ and $|s-k|<<1$.  Following the process in Section \ref{sec:analytic}, there is a constant $\delta_1>0$ such that $v_{0,1}(s,\phi,\cdot)$ and $v_{1,1}(s,\phi,\cdot)$ are extended analytically $(s,\phi)\in B(k,\delta_1)\times[\xi_1,\eta_1]$.  Then for each fixed $\phi\in[\xi_1,\eta_1]$, $v_1(s,\phi,\cdot)$ is extended analytically to $B_-(k,\delta_1)$ with respect to $s$.  Thus   this term is rewritten as
\begin{align*}
\int_{Q(\balpha_0)} e^{\i\balpha\cdot\x} v_1(\balpha,\cdot)\d\balpha=&\int_{Q(\balpha_0)\setminus A(-\j_1,k,\delta_1,[\xi_1,\eta_1])} e^{\i\balpha\cdot\x} v_1(\balpha,\cdot)\d\balpha\\&+\int_{\xi_1}^{\eta_1}\int_{C_-(k,\delta_1)}e^{\i s(\cos\phi,\sin\phi)\cdot \x}w(s,\phi,\bx) s\d s\d\phi.
\end{align*}
The above decomposition also holds for $v^\sigma_{I,1}$. 
Thus $(III)$ can be estimated be the following two terms
\begin{align*}
 (III)&\leq \left\|\int_{Q(\balpha_0)\setminus A(-\j_1,k,\delta_1,[\xi_1,\eta_1])} e^{\i\balpha\cdot\x}\left[v_1(\balpha,\cdot)-v^\sigma_{I,1}(\balpha,\cdot)\right]\d\balpha\right\|_{H^1(M)}\\&+\left\|\int_{\xi_1}^{\eta_1}\int_{C_-(k,\delta_1)}e^{\i s(\cos\phi,\sin\phi)\cdot \x}w(s,\phi,\bx) s\d s\d\phi\right\|_{H^1(M)}\\
 &:=(V)+(VI).
\end{align*}
The estimation of (V) follows Theorem \ref{th:ir}, and the estimation of (VI) follows Theorem \ref{th:is}. Thus there are two constants $c,C>0$ such that
\[
(III)\leq C\exp\left(-c\sqrt{k\delta_1}|\sigma|\right).
\]
Similarly, for the term (IV), we can also find a constant $\delta_2>0$ and $c,C>0$ such that
\[
(IV)\leq C\exp\left(-c\sqrt{k\delta_2}|\sigma|\right).
\]

From now on, we set $\delta:=\min\{\delta,\delta_1,\delta_2\}$, and let $\delta_1=\delta_2=\delta$. Moreover, we also let $\delta=|\sigma|^{-2\gamma}$ for a sufficiently large $|\sigma|$ and a fixed number $\gamma\in(0,1)$.

\subsubsection{Estimation of term (II)}
From the above Neumann series of $v_0(\balpha,\cdot)$ and $v^\sigma_{I,0}(\balpha,\cdot)$, we need the function
\begin{align*}
    g_D(m,n,\balpha,\sigma)&:=
h^m_D(\balpha,\sigma,\j_1)h^n_D(\balpha,\sigma,\j_2)-\sqrt{k^2-|\balpha+\j_1|^2}^m\sqrt{k^2-|\balpha+\j_2|^2}^n
    \\&=\sqrt{k^2-|\balpha+\j_1|^2}^m\sqrt{k^2-|\balpha+\j_2|^2}^n\\&\quad\cdot\left[\coth\left(-\i\sqrt{k^2-|\balpha+ \j_1|^2}\sigma\right)^m\coth\left(-\i\sqrt{k^2-|\balpha+\j_2|^2}\sigma\right)^n-1\right].
\end{align*}
for the  Dirichlet boundary condition, and 
 the function
\begin{align*}
    g_N(m,n,\balpha,\sigma)&:=
h^m_N(\balpha,\sigma,\j_1)h^n_N(\balpha,\sigma,\j_2)-\sqrt{k^2-|\balpha+\j_1|^2}^m\sqrt{k^2-|\balpha+\j_2|^2}^n
    \\&=\sqrt{k^2-|\balpha+\j_1|^2}^m\sqrt{k^2-|\balpha+\j_2|^2}^n\\&\quad\cdot\left[\tanh\left(-\i\sqrt{k^2-|\balpha+ \j_1|^2}\sigma\right)^m\tanh\left(-\i\sqrt{k^2-|\balpha+\j_2|^2}\sigma\right)^n-1\right].
\end{align*}
for the  Neumann boundary condition.

\vspace{0.5cm}

We first estimate the function $g_D(m,n,\balpha,\sigma)$. The estimation is based on the following lemma.

\begin{lemma}
    \label{lm:est_coth}
For fixed $\theta_1,\,\theta_2>0$, there is a constant $C>0$ such that
\[
\left|s\left[\coth\left(-\i s(\theta_1+\i\theta_2)\right)-1\right]\right|\leq C
\]
holds uniformly for any $s\in[0,+\infty)$ and $-\i s \in[0,+\infty)$.
\end{lemma}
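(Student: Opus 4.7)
The plan is to rewrite the bracketed expression in a form that exposes its singularity structure, and then estimate the resulting function on each of the two rays separately. Using the identity $\coth w - 1 = 2/(e^{2w}-1)$ with $w = -\i s \sigma$ where $\sigma := \theta_1+\i\theta_2$, the quantity to bound becomes
\[
f(s) := \frac{2s}{e^{-2\i s \sigma}-1},
\]
and it suffices to show that $f$ is continuous on each closed ray (with $s=0$ a removable singularity) and that $|f(s)|\to 0$ as $|s|\to\infty$ along either ray.

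I would first dispose of the apparent pole at $s=0$ via the Taylor expansion $e^{-2\i s \sigma}-1 = -2\i s \sigma + O(s^2)$, which yields $f(s) = \i/\sigma + O(s)$ and hence extends $f$ continuously through the origin. I would then compute the real part of the exponent on each ray: for real $s\geq 0$ one has $\Re(-2\i s\sigma) = 2s\theta_2$, while for $s = \i t$ with $t\geq 0$ one has $\Re(-2\i s\sigma) = 2t\theta_1$. Since $\theta_1,\theta_2>0$, both are nonnegative, so $|e^{-2\i s\sigma}|\geq 1$ and hence
\[
|e^{-2\i s\sigma}-1| \geq |e^{-2\i s\sigma}| - 1 = e^{\Re(-2\i s\sigma)} - 1,
\]
a lower bound that grows exponentially in $|s|$ while the numerator of $f$ is merely linear. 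Therefore $|f(s)|\to 0$ at infinity on each ray. I would also verify that the denominator has no other zero on either ray: $e^{-2\i s\sigma}=1$ requires $-2\i s\sigma\in 2\pi\i\Z$, i.e. $s\sigma\in -\pi\Z$; since $\sigma$ has strictly positive real and imaginary parts, $s\sigma$ fails to be real for any nonzero $s$ on either ray, so the only zero of the denominator there is $s=0$.

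With $f$ continuous on each closed ray and tending to zero at infinity, a uniform bound $C$ follows by combining the finite value $|\i/\sigma|$ at the origin with standard compactness on any bounded piece of the ray and the decay at infinity. The argument is essentially elementary; the only point that requires care is tracking the sign of $\Re(-2\i s \sigma)$ on each ray to ensure the lower bound on the denominator is genuine, but this is automatic from $\theta_1,\theta_2>0$, so I do not anticipate any serious obstacle.
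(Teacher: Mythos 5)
Your proof is correct and follows essentially the same route as the paper: rewrite $\coth w - 1 = 2/(e^{2w}-1)$, remove the apparent singularity at $s=0$ by Taylor expansion, and bound the denominator from below on each ray using $\theta_1,\theta_2>0$. You are in fact slightly more careful than the paper, which only asserts a constant lower bound on $\left|e^{-2\i s(\theta_1+\i\theta_2)}-1\right|$ away from the origin and does not explicitly invoke (as you do) the exponential growth of the denominator needed to absorb the linear factor $s$ in the numerator for large $|s|$.
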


\begin{proof}
Since 
\[
{s}\left[\coth\left(-\i s(\theta_1+\i\theta_2)\right)-1\right]=\frac{2s}{\exp\left[-2\i s(\theta_1+\i\theta_2)\right]-1}.
\]
We only need to show that $\exp\left[-2\i s(\theta_1+\i\theta_2)\right]-1$ is uniformly bounded from below.

First, from the Taylor series of the exponential function, $\lim_{s\rightarrow 0}r(s)=\frac{\i}{\theta_1+\i\theta_2}$. From the continuity of the function, there is a small $\epsilon>0$ such that $|r(s)|\leq 2$ holds for all $|s|\leq\epsilon$.

When $s$ is real and positive, and $s>\epsilon$,
\[
\left|\exp\left[-2\i s(\theta_1+\i\theta_2)\right]-1\right|\geq \exp(2\epsilon\theta_2)-1>0.
\]
When $s$ is purely imaginary and $-\i s>\epsilon$,
\[
\left|\exp\left[-2\i s(\theta_1+\i\theta_2)\right]-1\right|\geq \exp(2\epsilon\theta_1)-1>0.
\]
Thus when $|s|\geq\epsilon$, $\left|\exp\left[-2\i s(\theta_1+\i\theta_2)\right]-1\right|\geq \exp(2\epsilon\theta)-1>0$.
Combining these two parts, we conclude that the function is uniformly bounded. The proof is finished.

\end{proof}

Now we are prepared to estimate the function $g_D(m,n,\balpha,\sigma)$.

\begin{lemma}
    \label{lm:est_gd}
Suppose $|\delta|= |\sigma|^{-2\gamma}$ for some large $|\sigma|$.  There is a constant $C>0$ such that 
    \[
\left|g_D(m,n,\balpha,\sigma)\right|\leq C^{m+n}|\sigma|^{-\gamma(m+n)}.
\]
\end{lemma}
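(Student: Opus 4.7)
The plan is to reduce everything to a single triangle-inequality decomposition,
\begin{equation*}
|g_D(m,n,\balpha,\sigma)| \leq |s_1 A|^m \,|s_2 B|^n + |s_1|^m \,|s_2|^n,
\end{equation*}
where for brevity I write $s_i := \sqrt{k^2-|\balpha+\j_i|^2}$, $A := \coth(-\i s_1 \sigma)$, $B := \coth(-\i s_2 \sigma)$. Once I prove the two pointwise estimates $|s_i| \leq C\sqrt{\delta}$ and $|s_i \coth(-\i s_i \sigma)| \leq C\sqrt{\delta}$ on $Q(\balpha_0)$, both terms above are bounded by $C^{m+n}\delta^{(m+n)/2} = C^{m+n}|\sigma|^{-\gamma(m+n)}$, giving the conclusion (up to doubling the constant).

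For the bound on $|s_i|$, I would first verify the geometric fact that every point of $Q(\balpha_0)$ lies within distance $C\delta$ of $\balpha_0$. The partial annulus $A(-\j_t,k,\delta,[\xi_t,\eta_t])$ has radial width $2\delta$, and its angular range was defined to be the \emph{minimal} one containing $A(-\j_t,k,\delta)\cap D(\balpha_0,2\delta)$; since the latter set has diameter at most $4\delta$, the angular aperture is forced to be at most of order $\delta/k$. Hence the diameter of each partial annulus is $C\delta$, and because $\balpha_0$ itself lies in its closure we obtain $|\balpha-\balpha_0|\leq C\delta$ throughout $Q(\balpha_0)$. Since $|\balpha_0+\j_i|=k$ for every $\j_i\in J(\balpha_0)$, the reverse triangle inequality gives $\bigl||\balpha+\j_i|-k\bigr| \leq C\delta$, and therefore $|s_i|^2 = |k-|\balpha+\j_i||\cdot|k+|\balpha+\j_i|| \leq C\delta$.

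For the bound on $|s_i A_i|$, I rescale to bring Lemma \ref{lm:est_coth} into usable form. Writing $\sigma = |\sigma|(\cos\theta + \i\sin\theta)$ and setting $\tilde s := |\sigma| s_i$, we have $-\i s_i\sigma = -\i \tilde s(\cos\theta + \i\sin\theta)$, and $\tilde s$ is either real non-negative or purely imaginary with $-\i\tilde s \geq 0$, inherited from $s_i$ via the branch convention of Section \ref{sec:analytic}. Applying Lemma \ref{lm:est_coth} with the fixed data $\theta_1 = \cos\theta$, $\theta_2 = \sin\theta$ yields $|\tilde s[\coth(-\i s_i\sigma) - 1]| \leq C$, i.e.\ $|s_i[A_i - 1]| \leq C/|\sigma|$. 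Combining with the first estimate gives $|s_i A_i| \leq |s_i(A_i-1)| + |s_i| \leq C/|\sigma| + C\sqrt{\delta} \leq C'\sqrt{\delta}$, since $|\sigma|^{-1} \leq |\sigma|^{-\gamma} = \sqrt{\delta}$ for $\gamma \in (0,1)$ and $|\sigma|$ large.

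The main obstacle is the geometric bound $|\balpha-\balpha_0|\leq C\delta$ on $Q(\balpha_0)$: this is the quantitative form of the transverse intersection of the two singular circles $S(-\j_1,k)$ and $S(-\j_2,k)$ at $\balpha_0$, and it is precisely what makes the minimal angular range automatically of size $\delta$. Once this step is in place, the rest is a routine rescaling argument together with the auxiliary Lemma \ref{lm:est_coth} already at hand.
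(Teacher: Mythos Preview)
Your proof is correct and follows essentially the same route as the paper: both use the triangle-inequality split $|g_D|\le |s_1 A|^m|s_2 B|^n+|s_1|^m|s_2|^n$, both invoke the geometric fact that $|k^2-|\balpha+\j_i|^2|\le C\delta$ on $Q(\balpha_0)$, and both apply Lemma~\ref{lm:est_coth} after the rescaling $\tilde s=|\sigma|s_i$. Your version is in fact slightly more explicit, since you justify the $|s_i|\le C\sqrt{\delta}$ bound via the $O(\delta)$-diameter of $Q(\balpha_0)$, which the paper asserts without argument.
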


\begin{proof} 
Let $s=\sqrt{k^2-|\balpha+\j_1|^2}|\sigma|$ and $t=\sqrt{k^2-|\balpha+\j_2|^2}|\sigma|$. Since $|\delta|=|\sigma|^{-2\gamma}$, $|s|,\,|t|\leq C |\sigma|^{1-\gamma}$ for some constant $C>0$.
From Lemma \ref{lm:est_tanh}, there is a constant $C$, independent of $s$ and $t$ such that
\[
\left|s\coth\left(-\i s(\theta_1+\i\theta_2)\right)\right|\leq C(1+|s|),\,\left|t\coth\left(-\i t(\theta_1+\i\theta_2)\right)\right|\leq C(1+|t|).
\]
From direct computation,
\begin{align*}
\left|g_D(m,n,\balpha,\sigma)
\right||\sigma|^{m+n}
=&\Big|s^m t^n\left[\coth\left(-\i s(\theta_1+\i\theta_2)\right)^m\coth\left(-\i t(\theta_1+\i\theta_2)\right)^n-1\right]\Big|\\
\leq & C^m C^n(1+|s|)^m(1+|t|)^n+|s|^m|t|^n\\
\leq & C^{m+n}\left(1+C|\sigma|^{1-\gamma}\right)^{m+n}+C^{m+n}|\sigma|^{(1-\gamma)(m+n)},
\end{align*}
then there is a constant $C>0$ such that $\left|g_D(m,n,\balpha,\sigma)
\right||\sigma|^{m+n}\leq C^{m+n}|\sigma|^{(1-\gamma)(m+n)}$ when $|\sigma|>>1$.
Thus the proof is finished.
\end{proof}

\vspace{0.5cm}

Now we move on to  the function $g_N(m,n,\balpha,\sigma)$. 
First we need the following uniform boundedness for the 
hyper tangent function.

\begin{lemma}
\label{lm:est_tanh}
For fixed $\theta_1,\,\theta_2>0$, there is a constant $C>0$ such that
\[
\left|\tanh\left(-\i s(\theta_1+\i\theta_2)\right)\right|\leq C
\]
holds uniformly for any $s\in[0,+\infty)$ and $-\i s \in[0,+\infty)$.
\end{lemma}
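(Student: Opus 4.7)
The plan is to show that $w := -\i s(\theta_1 + \i\theta_2)$ always traces a continuous half-ray from the origin in a fixed closed wedge strictly inside the right half plane, and then to estimate $|\tanh w|$ by combining a simple exponential-tail bound with continuity on a compact set near the origin.

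Setting up the two regimes: if $s \in [0,\infty)$ is real, then $w = s\theta_2 - \i s\theta_1$, a half-ray from $0$ with $\arg w = -\arctan(\theta_1/\theta_2)$ in the closed fourth quadrant. If $-\i s \in [0,\infty)$, write $s = \i r$ with $r \geq 0$; then $w = r(\theta_1 + \i\theta_2)$, a half-ray with $\arg w = \arctan(\theta_2/\theta_1)$ in the closed first quadrant. Since $\theta_1,\theta_2 > 0$, in both cases $|\arg w| \leq \pi/2 - \eta$ for some $\eta > 0$ depending only on $\theta_1,\theta_2$, and in particular $\Re w \geq |w|\sin\eta \geq 0$ throughout.

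To bound $|\tanh w|$, write $\tanh w = (e^{2w} - 1)/(e^{2w}+1)$. On the tail $\Re w \geq 1$ one has
\[
|\tanh w| \leq \frac{|e^{2w}|+1}{|e^{2w}|-1} = \frac{e^{2\Re w} + 1}{e^{2\Re w} - 1} \leq \frac{e^{2} + 1}{e^{2} - 1}.
\]
On the remaining piece $\Re w \in [0,1]$, the wedge bound $\Re w \geq |w|\sin\eta$ forces $|w| \leq 1/\sin\eta$, so $w$ lies in a compact subset $K$ of the closed wedge. The poles of $\tanh$ are at $\i\pi(k+1/2)$, $k \in \Z$, all on the imaginary axis and therefore excluded from $K$. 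Hence $\tanh$ is continuous on $K$ and attains a finite maximum $C_K$ there. Taking $C := \max\{(e^{2}+1)/(e^{2}-1),\,C_K\}$ yields the uniform bound.

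The only step requiring care is the geometric observation that both admissible rays lie in a wedge separated from the imaginary axis (where the poles of $\tanh$ live), which is a direct consequence of $\theta_1,\theta_2 > 0$. No substantive obstacle arises; the argument parallels the proof of Lemma \ref{lm:est_coth}, but is in fact easier, since one is bounding $\tanh$ directly rather than a $\coth-1$ expression multiplied by an unbounded factor of $s$.
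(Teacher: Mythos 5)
Your proof is correct and follows essentially the same strategy as the paper's: both reduce to a two-regime split, using a compactness/continuity argument near $s=0$ and the fact that $\bigl|e^{-2\i s(\theta_1+\i\theta_2)}\bigr|$ grows (so the denominator $e^{2w}+1$ stays away from zero) in the far regime. The only cosmetic difference is that the paper bounds $\tanh(\cdot)-1=-2/(e^{2w}+1)$ by bounding the denominator from below, while you bound the quotient for $\tanh$ directly via the wedge geometry in the variable $w$.
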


\begin{proof}
Instead of $\tanh\left(-\i s(\theta_1+\i\theta_2)\right)$, we only need to check if $\tanh\left(-\i s(\theta_1+\i\theta_2)\right)-1$ is uniformly bounded. From definition,
\[
\tanh\left(-\i s(\theta_1+\i\theta_2)\right)-1=-\frac{2}{\exp\left(-2\i s(\theta_1+\i\theta_2)\right)+1}.
\]
Thus we only need to show that $$r_N(s):=\exp\left(-2\i s(\theta_1+\i\theta_2)\right)+1$$ is uniformly bounded from below. For details we refer to the proof of Lemma \ref{lm:est_coth}.
\end{proof}

Now we are prepared to estimate the function $g_N(m,n,\balpha,\sigma)$.

\begin{lemma}
    \label{lm:est_gn}
    Suppose $|\delta|= |\sigma|^{-2\gamma}$ for some large $|\sigma|$. Then there is a constant $C>0$ such that 
    \[
\left|g_N(m,n,\balpha,\sigma)\right|\leq C^{m+n}|\sigma|^{-\gamma(m+n)}.
\]
\end{lemma}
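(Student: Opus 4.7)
The strategy is to mirror the argument of Lemma \ref{lm:est_gd}, the Dirichlet analogue just proved, with the role of Lemma \ref{lm:est_coth} replaced by the tangent boundedness result Lemma \ref{lm:est_tanh}. Since $\tanh$ is itself uniformly bounded (unlike $\coth$, whose pole at zero needs to be cancelled by multiplying with its argument), the Neumann case is in fact slightly more direct than the Dirichlet one.

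First I would introduce the rescaled variables $s := \sqrt{k^2-|\balpha+\j_1|^2}\,|\sigma|$ and $t := \sqrt{k^2-|\balpha+\j_2|^2}\,|\sigma|$, and write $\sigma = |\sigma|(\theta_1+\i\theta_2)$ with $\theta_1:=\cos\theta$, $\theta_2:=\sin\theta$. Because $\balpha\in Q(\balpha_0)$ lies within distance $O(\delta)$ of both singular circles $S(-\j_1,k)$ and $S(-\j_2,k)$, one has $|\sqrt{k^2-|\balpha+\j_\ell|^2}|\leq C\sqrt{\delta}$, and the hypothesis $\delta=|\sigma|^{-2\gamma}$ yields $|s|,|t|\leq C|\sigma|^{1-\gamma}$. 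Moreover, by the branch-cut convention introduced in Section \ref{sec:analytic} and the fact that $\balpha\in W$ is real, each of $s$ and $t$ is either a non-negative real or purely imaginary with non-negative imaginary part, so the hypotheses of Lemma \ref{lm:est_tanh} are met.

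Next I would factor out $|\sigma|^{m+n}$ to rewrite
\[
g_N(m,n,\balpha,\sigma)\,|\sigma|^{m+n} = s^m t^n\left[\tanh\bigl(-\i s(\theta_1+\i\theta_2)\bigr)^m \tanh\bigl(-\i t(\theta_1+\i\theta_2)\bigr)^n - 1\right].
\]
By Lemma \ref{lm:est_tanh} each $\tanh$ factor is bounded by a constant, so the bracketed expression is bounded by $C^{m+n}+1$. Combining this with $|s|,|t|\leq C|\sigma|^{1-\gamma}$ through the triangle inequality yields
\[
|g_N(m,n,\balpha,\sigma)|\,|\sigma|^{m+n}\leq C^{m+n}|\sigma|^{(1-\gamma)(m+n)},
\]
which upon dividing by $|\sigma|^{m+n}$ gives the advertised bound.

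There is no essential obstacle; the argument is a near-verbatim copy of the proof of Lemma \ref{lm:est_gd}, only simpler because no $(1+|s|)(1+|t|)$-type factors intervene. The only items worth verifying carefully are that the parameter regime for $(s,t)$ matches the hypotheses of Lemma \ref{lm:est_tanh} and that the constant $C$ absorbing the $+1$ can be chosen uniformly in $m,n$; both follow immediately from the geometric setup.
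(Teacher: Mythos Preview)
Your proposal is correct and follows essentially the same approach as the paper's own proof: introduce the rescaled variables $s,t$, factor out $|\sigma|^{m+n}$, invoke Lemma~\ref{lm:est_tanh} to bound the $\tanh$ factors uniformly, and conclude via $|s|,|t|\leq C|\sigma|^{1-\gamma}$. The paper's version is terser (it simply writes ``use the same notations as in the proof of Lemma~\ref{lm:est_gd}'' and then the one-line estimate), but the content is identical.
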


\begin{proof}We use the same notations as in the proof of Lemma \ref{lm:est_gd}.    From direct computation,
\begin{align*}
\left|g_N(m,n,\balpha,\sigma)
\right||\sigma|^{m+n}
=&\left|s^m t^n\left[\tanh\left(-\i s(\theta_1+\i\theta_2)\right)^m\tanh\left(-\i t(\theta_1+\i\theta_2)\right)^n-1\right]\right|\\
\leq & |s|^m|t|^n(C^{m+n}+1)\leq C^{m+n}|\sigma|^{(1-\gamma)(m+n)}.
\end{align*}
The proof is finished.
\end{proof}

\vspace{0.5cm}

From Lemma \ref{lm:est_gd} and \ref{lm:est_gn}, we can find a constant $C>0$ such that
   \[
\left|g_I(m,n,\balpha,\sigma)\right|\leq C^{m+n}|\sigma|^{-\gamma(m+n)}
\]
holds uniformly for any $\balpha\in Q(\balpha_0)$. 
Now we are prepared estimate the term $(II)$.

\begin{theorem}
    \label{th:conv_M}
    When Assumption \ref{asp} holds. For sufficiently large $|\sigma|$ and $\gamma\in(0,1)$, there is a constant $C>0$ such that 
 \[
(II)
\leq C|\sigma|^{-6\gamma}.
\]   
\end{theorem}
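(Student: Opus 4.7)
The plan is to push the $H^1(M)$ norm inside the integral by Minkowski's inequality, and then control $\|v_0(\balpha,\cdot)-v^\sigma_{I,0}(\balpha,\cdot)\|_{H^1_\p(\Omega_H^0)}$ pointwise on $Q(\balpha_0)$. Since $|Q(\balpha_0)|=O(\delta^2)=O(|\sigma|^{-4\gamma})$ and $e^{\i\balpha\cdot\x}$ is uniformly bounded on the compact set $M$, a pointwise bound of order $|\sigma|^{-2\gamma}$ on $v_0-v^\sigma_{I,0}$ will produce the desired $|\sigma|^{-6\gamma}$.

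To get the pointwise bound, I would compare the two Neumann series term by term. For each $(m,n)$ with $m,n\ge 1$, decompose the $(m,n)$-contribution to $v_0-v^\sigma_{I,0}$ as
\begin{align*}
&g_I(m,n,\balpha,\sigma)\,\O\bigl(\widetilde{B}_{\j_1},\widetilde{B}_{\j_2},m,n\bigr)\widetilde{G}(\balpha,\cdot)\\
&\qquad+h^m_I(\balpha,\sigma,\j_1)\,h^n_I(\balpha,\sigma,\j_2)\Bigl[\O\bigl(\widetilde{B}_{\j_1},\widetilde{B}_{\j_2},m,n\bigr)\widetilde{G}-\O\bigl(\widetilde{B}^\sigma_{I,\j_1},\widetilde{B}^\sigma_{I,\j_2},m,n\bigr)\widetilde{G}^\sigma_I\Bigr].
\end{align*}
Using uniform bounds $\|\widetilde{B}_{\j_\ell}\|,\|\widetilde{B}^\sigma_{I,\j_\ell}\|\le C_B$ and $\|\O(\cdot,\cdot,m,n)\|\le\binom{m+n}{m}C_B^{m+n}$ together with Lemmas \ref{lm:est_gd} and \ref{lm:est_gn}, the first piece summed over $m,n\ge 1$ is dominated by
\[
C\sum_{m,n\ge 1}\binom{m+n}{m}(C C_B)^{m+n}|\sigma|^{-\gamma(m+n)},
\]
which for large $|\sigma|$ is a convergent double series whose leading contribution at $m=n=1$ is of order $|\sigma|^{-2\gamma}$.

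For the second piece, I would first verify that $|h_I(\balpha,\sigma,\j_\ell)|\le C|\sigma|^{-\gamma}$ uniformly on $Q(\balpha_0)$: writing $r_\ell=\sqrt{k^2-|\balpha+\j_\ell|^2}$ and using $r_\ell=O(\sqrt{\delta})=O(|\sigma|^{-\gamma})$ together with the boundedness results in Lemmas \ref{lm:est_coth} and \ref{lm:est_tanh} applied to $s=r_\ell|\sigma|$, the factor $r_\ell\coth(-\i r_\ell\sigma)$ (respectively $r_\ell\tanh(-\i r_\ell\sigma)$) stays of order $|\sigma|^{-\gamma}$. A telescoping identity for $\O(\widetilde{B}_{\j_1},\widetilde{B}_{\j_2},m,n)\widetilde{G}-\O(\widetilde{B}^\sigma_{I,\j_1},\widetilde{B}^\sigma_{I,\j_2},m,n)\widetilde{G}^\sigma_I$ together with Corollary \ref{cr:est_operator} then bounds the bracket by $(m+n)C_B^{m+n-1}\cdot C\exp(-c\sqrt{k\delta}|\sigma|)$. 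Since $\sqrt{k\delta}|\sigma|=\sqrt{k}|\sigma|^{1-\gamma}\to\infty$, the second piece summed over $m,n\ge 1$ is $o(|\sigma|^{-N})$ for every $N$ and is absorbed into the first.

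Combining these steps gives $\|v_0(\balpha,\cdot)-v^\sigma_{I,0}(\balpha,\cdot)\|_{H^1_\p(\Omega_H^0)}\le C|\sigma|^{-2\gamma}$ uniformly on $Q(\balpha_0)$, and integrating over $Q(\balpha_0)$ yields $(II)\le C|\sigma|^{-6\gamma}$. The main obstacle I expect is the bookkeeping of the two-index Neumann series on $Q(\balpha_0)$: ensuring the uniform convergence for large $|\sigma|$, tracking the $\binom{m+n}{m}$ permutations inside $\O(\widetilde{B}_{\j_1},\widetilde{B}_{\j_2},m,n)$, and carrying out the telescoping for the operator/data difference without losing the exponential gain supplied by Corollary \ref{cr:est_operator}. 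A secondary but nontrivial point is the uniform bound for $h_I$ near the singular curves, where $\coth$ has poles that are tamed only after combining with the vanishing square-root factor and exploiting the scaling $\delta=|\sigma|^{-2\gamma}$.
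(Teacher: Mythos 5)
Your proposal is correct and follows essentially the same route as the paper: the same splitting of $v_0-v^\sigma_{I,0}$ into the $g_I(m,n,\balpha,\sigma)\,\O(\widetilde{B}_{\j_1},\widetilde{B}_{\j_2},m,n)\widetilde{G}$ part (controlled via Lemmas \ref{lm:est_gd} and \ref{lm:est_gn}, with leading term $m=n=1$ of order $|\sigma|^{-2\gamma}$) and the operator/data-difference part (absorbed via Corollary \ref{cr:est_operator}), combined with the area bound $|Q(\balpha_0)|=O(\delta^2)=O(|\sigma|^{-4\gamma})$ to yield $|\sigma|^{-6\gamma}$. If anything, your treatment of the second piece (the uniform bound $|h_I|\leq C|\sigma|^{-\gamma}$ and the explicit telescoping) is more detailed than the paper's, which simply cites Corollary \ref{cr:est_operator}.
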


\begin{proof}

From the Neumann series, 
\begin{align*}
(v^\sigma_{I,0}-v_0)(\balpha,\cdot)&=\sum_{m,n=1}^\infty h_I^m(\balpha,\sigma,\j_1)h_I^{n}(\balpha,\sigma,\j_2)\O\left(\widetilde{B}^\sigma_{I,\j_1}(\balpha),\widetilde{B}^\sigma_{I,\j_2}(\balpha),m,n\right)\widetilde{G}^\sigma_I(\balpha,\cdot)\\
&-\sum_{m,n=1}^\infty\sqrt{k^2-|\balpha+\j_1|^2}^m \sqrt{k^2-|\balpha+\j_2|^2}^{n}\O\left(\widetilde{B}_{\j_1}(\balpha),\widetilde{B}_{\j_2}(\balpha),m,n\right)\widetilde{G}(\balpha,\cdot)\\&=\sum_{m,n=1}^\infty h_I^m(\balpha,\sigma,\j_1)h_I^{n}(\balpha,\sigma,\j_2)\left[\O\left(\widetilde{B}^\sigma_{I,\j_1}(\balpha),\widetilde{B}^\sigma_{I,\j_2}(\balpha),m,n\right)\widetilde{G}^\sigma_I(\balpha,\cdot)\right.\\&\hspace{5.4cm}\left.-\O\left(\widetilde{B}_{\j_1}(\balpha),\widetilde{B}_{\j_2}(\balpha),m,n\right)\widetilde{G}(\balpha,\cdot)\right]\\
&+\sum_{m,n=1}^\infty g_I(m,n,\balpha,\sigma) \O\left(\widetilde{B}_{\j_1}(\balpha),\widetilde{B}_{\j_2}(\balpha),m,n\right)\widetilde{G}(\balpha,\cdot),
\end{align*}

For the estimation of $\left\|\widetilde{B}^\sigma_{I,\j_\ell}(\balpha)-\widetilde{B}_{\j_\ell}(\balpha)\right\|$
and  $\left\|\widetilde{G}^\sigma_I(\balpha,\cdot)-\widetilde{G}(\balpha,\cdot)\right\|$, we refer to Corollary \ref{cr:est_operator}. Thus we only need to focus on the second term.
Let the $(m,n)$-th term be denoted by
\begin{align*}
 I_{m,n}=g_I(m,n,\balpha,\sigma)\O\left(\widetilde{B}_{\j_1}(\balpha),\widetilde{B}_{\j_2}(\balpha),m,n\right)\widetilde{G}(\balpha,\cdot).   
\end{align*}
Let the norm of $\widetilde{B}_{\j_1}(\balpha)$, $\widetilde{B}_{\j_2}(\balpha)$ be uniformly bounded by a positive constant $C$, then
\[
|I_{m,n}|\leq C^{m+n}{{m+n}\choose{m}}\int_{Q(\balpha_0)}\left|g(m,n,\balpha,\sigma)\right|\d\balpha\leq C^{m+n}{{m+n}\choose{m}}|\sigma|^{-(m+n+4)\gamma}.
\]

Since 
\[
\sum_{m=1}^\infty\sum_{n=1}^\infty C^{m+n}{{m+n}\choose{m}}|\sigma|^{-(m+n+4)\gamma}\leq \sum_{n=2}(2C)^{n}|\sigma|^{-(n+4)\gamma},
\]
there is a constant $C>0$ such that
\[
\left|\sum_{m=1}^\infty\sum_{n=1}^\infty I_{m,n}\right|\leq C|\sigma|^{-6\gamma}
\]
holds uniformly for sufficiently large $|\sigma|$. 
The proof is finished.
\end{proof}

Combining the estimations on (I), (II), (III) and (IV), we get the main result in this subsection.
\begin{theorem}
    \label{th:iq}
    When $\theta\in(0,\pi/2)$ is fixed, $\balpha_0\in W$ satisfies $\#J(\balpha_0)>1$. Then for any fixed $\gamma\in(0,1)$, 
    $I_{Q(\balpha_0)}(\sigma,I)\leq C |\sigma|^{-6\gamma}$ holds for sufficiently large $|\sigma|$ and $I=D,N$.
\end{theorem}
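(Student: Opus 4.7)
The plan is to assemble the four bounds on (I), (II), (III), (IV) that were already produced in the two preceding subsubsections, and to verify that under the scaling $\delta=|\sigma|^{-2\gamma}$ the polynomial bound from (II) dominates. No new ideas are needed; the work is simply collecting and comparing exponents.

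First I would dispose of (I). By Corollary \ref{cr:est_operator} the integrand $\widetilde{G}(\balpha,\cdot)-\widetilde{G}^\sigma_I(\balpha,\cdot)$ is bounded uniformly in $\balpha\in Q(\balpha_0)$ by $C\exp(-c\sqrt{k\delta}|\sigma|)$, and the area of $Q(\balpha_0)$ is $O(\delta^2)=O(|\sigma|^{-4\gamma})$. Inserting $\delta=|\sigma|^{-2\gamma}$ turns the exponent into $-c\sqrt{k}\,|\sigma|^{1-\gamma}$, and since $\gamma\in(0,1)$ gives $1-\gamma>0$ the contribution of (I) decays faster than any negative power of $|\sigma|$.

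Next, the bounds
\[
(III)\le C\exp\!\left(-c\sqrt{k\delta_1}\,|\sigma|\right),\qquad (IV)\le C\exp\!\left(-c\sqrt{k\delta_2}\,|\sigma|\right),
\]
derived in the preceding subsubsection by reducing $v_\ell-v^\sigma_{I,\ell}$ to a single-circle singularity and then invoking Theorems \ref{th:ir} and \ref{th:is}, become $C\exp(-c\sqrt{k}\,|\sigma|^{1-\gamma})$ after setting $\delta_1=\delta_2=\delta=|\sigma|^{-2\gamma}$, hence are also super-polynomially small. Finally, Theorem \ref{th:conv_M} supplies $(II)\le C|\sigma|^{-6\gamma}$ directly. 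Adding the four contributions and observing that, for sufficiently large $|\sigma|$, the three super-polynomially small terms are absorbed into the polynomial one, we arrive at
\[
I_{Q(\balpha_0)}(\sigma,I)\le C|\sigma|^{-6\gamma},
\]
which is the claim.

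The only subtle point, and the one I would flag as the potential obstacle, is bookkeeping for $\#J(\balpha_0)=m>2$; Theorem \ref{th:conv_M} and the splitting into (I)--(IV) were written explicitly for $m=2$. For general $m$ the decomposition requires introducing one function $v_\ell$ per singular circle and a single remainder $v_0$ collecting all mixed terms, so that there are $m$ analogues of (III)--(IV) (each still super-polynomially small by the same argument) and a single (II). The multi-variable analogue of $g_I$ then gives $|I_{n_1,\dots,n_m}|\le C^{n_1+\cdots+n_m}\binom{n_1+\cdots+n_m}{n_1,\dots,n_m}|\sigma|^{-(n_1+\cdots+n_m+4)\gamma}$ on $Q(\balpha_0)$, whose leading contribution $(n_1,\dots,n_m)=(1,\dots,1)$ is of order $|\sigma|^{-(m+4)\gamma}\le |\sigma|^{-6\gamma}$ for $m\ge 2$. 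Thus the bound stated in the theorem is preserved, and in fact improves with $m$; no new estimate is required, only careful accounting.
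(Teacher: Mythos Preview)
Your proposal is correct and follows exactly the paper's approach: the paper's proof of Theorem~\ref{th:iq} is literally the sentence ``Combining the estimations on (I), (II), (III) and (IV), we get the main result in this subsection,'' and you have carried out precisely that combination, correctly observing that under the scaling $\delta=|\sigma|^{-2\gamma}$ the exponential bounds on (I), (III), (IV) become $C\exp(-c\sqrt{k}\,|\sigma|^{1-\gamma})$ and are dominated by the algebraic bound $(II)\le C|\sigma|^{-6\gamma}$ from Theorem~\ref{th:conv_M}.

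One small slip in your $m>2$ remark: with the decomposition you describe (one $v_\ell$ per circle and a single $v_0$ for \emph{all} mixed terms), $v_0$ contains every multi-index with at least two nonzero entries, so the leading contribution to (II) comes from pairs such as $(1,1,0,\dots,0)$ with total degree $2$, not from $(1,\dots,1)$; hence the bound stays at $|\sigma|^{-6\gamma}$ rather than improving to $|\sigma|^{-(m+4)\gamma}$. This does not affect the theorem, and the paper itself only says ``For $m>2$ everything is very similar'' without further detail.
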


\subsection{Convergence result for the PML method}

From the previous subsections, each component in the right hand side of \eqref{eq:diff_plit} has been well studied. Thus we only need to combine these results to get the convergence result.  In the domains $R$ and $S$, exponential convergence has been proved, with a similar technique from the 2D cases (see \cite{Zhang2021b}). However, near the intersection $Q(\balpha_0)$ between multiple annluses, only algebraic convergence, i.e, $|\sigma|^{-6\gamma}$. Fortunately we only have finite number of such intersection. For a special case that $k<0.5$, any $\balpha_0\in P$ has the property that $\# J(\balpha_0)=1$. In this case $W=R\cup S$, thus we can still prove the exponential convergence.
We conclude  the total convergence for $u^\sigma$ to $u$ in the following theorem.

\begin{theorem}
\label{th:conv_u}
Suppose Assumption \ref{asp} holds. Then 
\begin{itemize}
    \item When $k<0.5$, there are two positive constants $C$ and $c$ such that
    \[
\|u^\sigma_I-u\|_{H^1(\Omega_H^0)}\leq C\exp\left(-c|\sigma|\right).
    \]
    \item When $k\geq 0.5$, for any fixed parameter $\gamma\in(0,1)$,     there is a positive constant $C$  such that
   \[
\|u^\sigma_I-u\|_{H^1(\Omega_H^0)}\leq C|\sigma|^{-6\gamma}.
    \]
\end{itemize}
\end{theorem}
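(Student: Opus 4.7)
The idea is simply to assemble the three estimates already established, namely Theorem \ref{th:ir} (integral over $R$), Theorem \ref{th:is} (integral over $S$), and Theorem \ref{th:iq} (integral over each $Q(\balpha_0)$), through the triangle-inequality splitting \eqref{eq:diff_plit}. Take $M=\overline{\Omega_H^0}$, which is a compact subset of $\Omega_H$ since $\Omega_H^0$ is bounded in all three coordinates, so the bound on $\|u-u^\sigma_I\|_{H^1(M)}$ in \eqref{eq:diff_plit} directly gives a bound on $\|u-u^\sigma_I\|_{H^1(\Omega_H^0)}$. All three of the cited theorems require only that Assumption \ref{asp} hold (and sufficiently large $|\sigma|$), which is exactly our standing hypothesis.

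For the case $k<0.5$, I would invoke the remark following the definition of $J(\balpha)$: when $k<0.5$, the singular set $P$ equals the single circle $S(\bm 0,k)$ and $\# J(\balpha)=1$ for every $\balpha\in P$. Consequently no two singular circles meet inside $W$, so $Q=\emptyset$ and the decomposition \eqref{eq:diff_plit} reduces to the $R$-term and the $S$-term. Theorem \ref{th:ir} and Theorem \ref{th:is} each give an $\exp(-c\sqrt{k\delta}|\sigma|)$ bound with $\delta$ a fixed positive constant (chosen $<\min\{k,0.5-k,\delta_W\}$ as in Section 3), so combining them and absorbing $\sqrt{k\delta}$ into the constant $c$ yields the desired exponential estimate $C\exp(-c|\sigma|)$.

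For the case $k\geq 0.5$, all three terms in \eqref{eq:diff_plit} are present. The $R$-term and the finite sum of $S$-terms are still bounded by $C\exp(-c\sqrt{k\delta}|\sigma|)$ by Theorems \ref{th:ir} and \ref{th:is}. For the middle term, the sum is over the finitely many $\balpha_0\in P$ with $\# J(\balpha_0)>1$ (only finitely many such points exist by the remark cited above), and each summand is bounded by $C|\sigma|^{-6\gamma}$ via Theorem \ref{th:iq}; absorbing the number of such points into $C$ gives a total bound of $C|\sigma|^{-6\gamma}$ for the $Q$-contribution. Recall here that in the proof of Theorem \ref{th:iq} we already set $\delta=|\sigma|^{-2\gamma}$, so $\sqrt{k\delta}|\sigma|=\sqrt{k}|\sigma|^{1-\gamma}\to\infty$ and the exponential terms from the $R$- and $S$-contributions decay faster than any algebraic rate $|\sigma|^{-6\gamma}$. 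Hence the algebraic term dominates and we get $\|u^\sigma_I-u\|_{H^1(\Omega_H^0)}\leq C|\sigma|^{-6\gamma}$.

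There is no real obstacle to this final theorem: the technical work has already been completed in Theorems \ref{th:ir}, \ref{th:is} and \ref{th:iq}. The only points deserving care are (i) making sure that the choice of $\delta$ is consistent across the three estimates, which is handled by taking $\delta$ to be the minimum of the values obtained in each section and letting $\delta=|\sigma|^{-2\gamma}$ only when $k\geq 0.5$, and (ii) verifying that in the $k<0.5$ regime one can indeed keep $\delta$ as a fixed positive constant so that the exponential decay is genuinely in $|\sigma|$ rather than in $|\sigma|^{1-\gamma}$.
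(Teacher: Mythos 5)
Your proposal is correct and matches the paper's own (largely implicit) argument: the paper likewise just combines Theorems \ref{th:ir}, \ref{th:is} and \ref{th:iq} through the splitting \eqref{eq:diff_plit}, notes that $Q=\emptyset$ when $k<0.5$ so the exponential bounds survive with fixed $\delta$, and for $k\geq 0.5$ lets the finitely many $Q(\balpha_0)$-terms with $\delta=|\sigma|^{-2\gamma}$ dominate since $\exp(-c\sqrt{k}|\sigma|^{1-\gamma})$ beats any algebraic rate. Your attention to the consistency of $\delta$ across the three estimates is a point the paper itself glosses over, and is welcome.
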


\begin{remark}
    The result is also extendable to the two dimensional cases, when $k$ is a half integer. The only difference is in the area of $Q(\balpha_0)$, which is only $O(|\sigma|^{-2\gamma})$ since it is one dimensional. So in this case, we can prove that the convergence is $O(|\sigma|^{-4\gamma})$, which is slower than the 3D bi-periodic case.
\end{remark}

In this paper, we prove that the PML method converges exponentially for small wave numbers but algebraically at a higher order for more general cases. Although the conjecture in \cite{Chand2009} is not proved, the convergence result is already improved significantly from this paper. Since the numerical computation for this problem is also challenging, we will discuss that and show numerical results in a following paper.

\section*{Acknowledgements.} 

This research was funded by the Deutsche Forschungsgemeinschaft (DFG, German Research Foundation) -- Project-ID 258734477 -- SFB 1173.

\bibliographystyle{plain}

\begin{thebibliography}{10}

\bibitem{Beren1994}
J.-P. Berenger.
\newblock A perfectly matched layer for the absorption of electromagnetic
  waves.
\newblock {\em Journal of Computational Physics}, 114(2):185–200, 1994.

\bibitem{Chand2010}
S.~N. {Chandler-Wilde} and J.~Elschner.
\newblock Variational approach in weighted {S}obolev spaces to scattering by
  unbounded rough surfaces.
\newblock {\em SIAM. J. Math. Anal.}, 42:2554--2580, 2010.

\bibitem{Chand2005}
S.~N. {Chandler-Wilde} and P.~Monk.
\newblock Existence, uniqueness, and variational methods for scattering by
  unbounded rough surfaces.
\newblock {\em SIAM. J. Math. Anal.}, 37:598--618, 2005.

\bibitem{Chand2009}
S.~N. Chandler-Wilde and P.~Monk.
\newblock The {P}{M}{L} for rough surface scattering.
\newblock {\em Applied Numerical Mathematics}, 59:2131--2154, 2009.

\bibitem{Chen2007}
J.~Chen and Z.~Chen.
\newblock An adaptive perfectly matched layer technique for 3-d time-harmonic
  electromagnetic scattering problems.
\newblock {\em Math. Comput.}, 77(262):673--698, 2007.

\bibitem{Chen2005a}
Z.~Chen and X.~Liu.
\newblock An adaptive perfectly matched layer technique for time-harmonic
  scattering problems.
\newblock {\em SIAM J. Numer. Anal.}, 43:645--671, 2005.

\bibitem{Chen2003}
Z.~Chen and H.~Wu.
\newblock An adaptive finite element method with perfectly matched absorbing
  layers for the wave scattering by periodic structures.
\newblock {\em SIAM Journal on Numerical Analysis}, 41(3):799–826, 2003.

\bibitem{Colli1998}
F.~Collino and P.~Monk.
\newblock The perfectly matched layer in curvilinear coordinates.
\newblock {\em SIAM J. Sci. Comput.}, 19(6):2061--2090, 1998.

\bibitem{Hohag2003}
T.~Hohage, F.~Schmidt, and L.~Zschiedrich.
\newblock Solving time-harmonic scattering problems based on the pole condition
  ii: Convergence of the {P}{M}{L} method.
\newblock {\em SIAM J. Math. Appl.}, 35(3):547--560, 2003.

\bibitem{John2010}
S.~G. Johnson.
\newblock Notes on perfectly matched layers ({P}{M}{L}s).
\newblock {\em http://math.mit.edu/~stevenj/18.369/pml.pdf}, 2010.

\bibitem{Kirsc1993}
A.~Kirsch.
\newblock Diffraction by periodic structures.
\newblock In L.~P{\"a}varinta and E.~Somersalo, editors, {\em Proc. Lapland
  Conf. on Inverse Problems}, pages 87--102. Springer, 1993.

\bibitem{Lassa1998b}
M.~Lassas and E.~Somersalo.
\newblock On the existence and convergence of the solution of {P}{M}{L}
  equations.
\newblock {\em Computing}, 60(3):229--241, 1998.

\bibitem{Lechl2016}
A.~Lechleiter.
\newblock The {F}loquet-{B}loch transform and scattering from locally perturbed
  periodic surfaces.
\newblock {\em J. Math. Anal. Appl.}, 446(1):605--627, 2017.

\bibitem{Lechl2015e}
A.~Lechleiter and D.-L. Nguyen.
\newblock {Scattering of {H}erglotz waves from periodic structures and mapping
  properties of the {B}loch transform}.
\newblock {\em {Proc. Roy. Soc. Edinburgh Sect. A}}, 231:1283--1311, 2015.

\bibitem{Li2011}
P.~Li, H.~Wu, and W.~Zheng.
\newblock Electromagnetic scattering by unbounded rough surfaces.
\newblock {\em SIAM J. Math. Anal.}, 43(3):1205--1231, 2011.

\bibitem{Zhang2021b}
R.~Zhang.
\newblock Exponential convergence of perfectly matched layers for scattering
  problems with periodic surfaces.
\newblock {\em SIAM J. Numer. Math.}, 60(2):804--823, 2022.

\bibitem{Zhou2018}
W.~Zhou and H.~Wu.
\newblock An adaptive finite element method for the diffraction grating problem
  with {P}{M}{L} and few-mode dtn truncations.
\newblock {\em J. Sci. Comput.}, 76:1813--1838, 2018.

\end{thebibliography}
\providecommand{\noopsort}[1]{}

\end{document}